  \numberwithin{equation}{section}
  \theoremstyle{definition}
  \newtheorem{Theorem}{Theorem}[section]
 \newtheorem{Lemma}[Theorem]{Lemma}
 \newtheorem{Definition}[Theorem]{Definition}
  \newtheorem{Remark}[Theorem]{Remark}
  \newtheorem{Algorithm}[Theorem]{Algorithm}
 \newtheorem{Example}[Theorem]{Example}
 \newtheorem{Notation}[Theorem]{Notation}
\newtheorem*{Problem}{Problem}
\newtheorem*{IExample}{Example}
\newtheorem*{TheoremPt1}{Theorem \ref{BinomialMainTheorem: T}: Part I}  
\newtheorem*{TheoremPt2}{Theorem \ref{BinomialMainTheorem: T}: Part II}  
\newtheorem*{TheoremPt3}{Theorem \ref{BinomialMainTheorem: T}: Part III}
\newtheorem*{BinomialTechnicalLemma1}{Lemma \ref{BinomialTechnicalLemma1: L}}
\newtheorem*{BinomialTechnicalLemma2}{Lemma \ref{BinomialTechnicalLemma2: L}}
\newcommand{\up}[1]{\left\lceil #1 \right\rceil}
\newcommand{\mfrak}[1]{\mathfrak{#1}}
\newcommand{\Spec}{\operatorname{Spec}}
\newcommand{\Support}{\operatorname{Supp}}
\renewcommand{\P}{\boldsymbol{P}}
\renewcommand{\k}{\boldsymbol{k}}
\newcommand{\s}{\boldsymbol{s}}
\renewcommand{\u}{\boldsymbol{u}}
\newcommand{\x}{\boldsymbol{x}}
\newcommand{\m}{\mfrak{m}}
\newcommand{\0}{\boldsymbol{0}}
\renewcommand{\bold}[1]{\mathchoice{\hbox{\boldmath $\displaystyle #1$}}
        {\hbox{\boldmath $\textstyle #1$}}
        {\hbox{\boldmath $\scriptstyle #1$}}
        {\hbox{\boldmath $\scriptscriptstyle #1$}}}
 \newcounter{algorithmcounter}
 \newenvironment{algorithm}
{\begin{list}{\textbf{Step \arabic{algorithmcounter}:}}{\addtolength{\leftmargin}{-.3in}}
\usecounter{algorithmcounter}
  \setlength{\itemsep}{1pt}
  \setlength{\parskip}{1pt}
  \setlength{\parsep}{1pt}}
{\end{list}}
\newcommand{\tr}[2]{\left \langle {#1} \right \rangle_{#2}}
\newcommand{\tail}[2]{ \left \ldbrack {#1} \right \rdbrack_{#2}}
\newcommand{\bracket}[2]{ {#1}^{[p^{#2}]}}
\newcommand{\error}{\varepsilon}
\newcommand{\base}{\operatorname{base}}
\newcommand{\rref}[1]{(\ref{#1})}
\renewcommand{\bracket}[2]{ {#1}^{[p^{#2}]}}
\newcommand{\set}[1]{ \left\{ \, #1 \, \right\}}
\renewcommand{\(}{\left(}
\renewcommand{\)}{\right)}
\newcommand{\leftexp}[2]{{\vphantom{#2}}^{#1}{#2}}
\newcommand{\ulP}{{\leftexp{\star}{\P}}} 
\newcommand{\lrP}{{\P_{\star}}}
\newcommand{\eeta}{\bold{\eta}}
\newcommand{\ssigma}{\bold{\sigma}}
\newcommand{\aalpha}{\bold{\alpha}}
\newcommand{\bbeta}{\bold{\beta}}
\newcommand{\Interior}{\operatorname{Interior}}
\newcommand{\norm}[1]{ \left | #1 \right | }
\newcommand{\digit}[2]{ #1^{\(#2\)} }
\newcommand{\Lattice}[2]{ \frac{1}{#1} \cdot \mathbb{N}^{#2} }
\renewcommand{\th}{\text{th}}
\renewcommand{\exp}[2]{ {#1}^{#2} }
\newcommand{\N}{\mathbb{N}}
\renewcommand{\a}{\bold{a}}
\renewcommand{\b}{\bold{b}}
\newcommand{\sM}{\mathscr{M}}
\newcommand{\dotp}{\bullet} 
\newcommand{\EM}{\bold{\operatorname{E}}}
\newcommand{\vone}{\bold{1}}
\newcommand{\vleq}{\preccurlyeq}
\newcommand{\vl}{\prec}
\newcommand{\vgeq}{\succcurlyeq}
\newcommand{\fpt}[1]{\bold{\operatorname{fpt}}_{\m}(#1)}
\newcommand{\localfpt}[2]{\bold{\operatorname{fpt}}_{#1}(#2)}
\newcommand{\lct}[1]{\bold{\operatorname{lct}}_{\0}(#1)}
\newcommand{\dee}{d}
\renewcommand{\ll}{\bold{\lambda}}
\renewcommand{\gg}{\bold{\gamma}}
\newcommand{\LIP}{P^{\circ}_{\text{lower}}}
\renewcommand{\L}{\mathbb{L}}
\newcommand{\MTP}{Main Theorem} 
\begin{document}


\title
 {$F$-pure thresholds of binomial hypersurfaces}
\author{ Daniel J. Hern\'andez }
\thanks{The author was partially supported by the National Science Foundation RTG grant number 0502170 at the University of Michigan.}

\begin{abstract} 
We use estimates given in \cite{Polynomials} to deduce a formula for the $F$-pure threshold of a binomial hypersurface over a field of characteristic $p>0$.  These formulas are given in terms of the associated splitting polytope, and remain valid over any characteristic.
\end{abstract}

\maketitle

\section{Introduction}
Let $f \in \L[x_1, \cdots, x_m]$ be a polynomial over a field $\L$ of characteristic $p>0$ vanishing at the origin, so that $f \in \m :=(x_1, \cdots, x_m)$. The $F$-pure threshold of $f$, denoted $\fpt{f}$, is an invariant that measures the singularities of $f$ near $\0$, and is defined using properties of the Frobenius morphism.  Remarkably, the $F$-pure threshold is closely related to the \emph{log canonical threshold}, an invariant of singularities defined over $\mathbb{C}$.  Indeed, if $f$ has rational coefficients and $f(\0) = 0$, then one may compute $\lct{f}$, the log canonical threshold of $f$.  However, one may also reduce the coefficients of $f$ modulo $p$ to obtain a family of models $f_p$ over $\mathbb{F}_p$ with $f_p(\0) = 0$, and we have the following relation:  $\lim_{p \to \infty} \fpt{f_p} = \lct{f}$ \cite[Theorem 3.5]{MTW2005}.  Furthermore, it is conjectured that $\fpt{f_p} = \lct{f}$ for infinitely many $p$.  This motivates the following problem.

\begin{Problem}
Given a polynomial $f$ over $\mathbb{Q}$ with $f(\0) = 0$, compute the function \[ \bold{\operatorname{fpt}}: \Spec \mathbb{Z} \to \mathbb{R} \text{ defined by }p \mapsto \fpt{f_p}. \]
\end{Problem}

\begin{IExample}
Let $f \in \mathbb{Q}[x, y, z]$ be a polynomial with $f(\0) = \0$ having an isolated singularity at $\0$, so that $f$ defines an elliptic curve $E \subseteq \mathbb{P}^2_{\mathbb{Q}}$.  It follows from recent work of B. Bhatt (and a generalization by Bhatt and A. Singh) that \[ \bold{\operatorname{fpt}}: \Spec \mathbb{Z} \to \mathbb{R} \text{ is given by } p \mapsto \begin{cases} 1 & \text{ if $E$ is not supersingular at $p$} \\  1 - \frac{1}{p} & \text{ otherwise.} \end{cases} \]
\end{IExample}

Formulas for $\bold{\operatorname{fpt}}$ are rare.  Besides this example, the only such formulas are those for the polynomials $x^2 + y^3, x^2 + y^7$, and $x^5 + y^4 + x^3 y^2$ appearing in \cite{MTW2005}.  However, in the recent preprint \cite{Diagonals}, the author has computed $\bold{\operatorname{fpt}}$ for all \emph{diagonal} polynomials.  

The main result of this note is Theorem \ref{BinomialMainTheorem: T}, which leads to Algorithm \ref{BinomialAlgorithm}, an algorithm for computing the $F$-pure threshold of an arbitrary \emph{binomial} in any (prime) characteristic.  The statement of Theorem \ref{BinomialMainTheorem: T} is technical, so we omit it here and refer the reader to Examples \ref{Comp1: E}, \ref{Comp2: E}, and \ref{Comp3: E} of how Theorem \ref{BinomialMainTheorem: T} is used to compute $F$-pure thresholds. The formulas for $F$-pure thresholds given by Theorem \ref{BinomialMainTheorem: T} are in terms of the geometry of the associated \emph{splitting polytope}; see Definition \ref{SplittingPolytopeDefinition}.  The splitting polytope associated to a binomial is a rational polytope contained in $[0,1]^2$, and was previously used in \cite{TakShi2009} to compute the log canonical threshold of certain binomial \emph{ideals}.

\subsection*{Acknowledgements}
This article is part of my Ph.D. thesis, which was completed at the University of Michigan under the direction of Karen Smith.  I would like to thank Karen, Emily Witt, and Michael Von Korff for participating in many useful discussions.

\section{On base $p$ expansions}

\begin{Definition}
\label{ExpansionDefiniton}  Let $\alpha \in (0,1]$, and let $p$ be a prime number.  There exist a unique collection of integers $\digit{\alpha}{d}$ such that $0 \leq \digit{\alpha}{d} \leq p-1, \alpha = \sum_{d \geq 1} \frac{\digit{\alpha}{d}}{p^d}$, and such that $\digit{\alpha}{d}$ is not eventually zero as a function of $d$.  The integers $\digit{\alpha}{d}$ are called the \emph{digits} of $\alpha$ (in base $p$), and the expression $\alpha = \sum_{d \geq 1} \frac{\digit{\alpha}{d}}{p^d}$ is called the \emph{non-terminating} (base $p$) \emph{expansion of $\alpha$}.
\end{Definition}

\begin{Definition}
\label{TruncationDefinition}

Let $\alpha \in  [0,1]$, and let $p$ be a prime number.  We use $\tr{\alpha}{e}$ to denote $\sum_{d=1}^e \frac{ \digit{\alpha}{d}}{p^d}$, the \emph{$e^{\th}$ truncation} of $\alpha$ (in base $p$), and $\tail{\alpha}{e}$ to denote $\alpha - \tr{\alpha}{e} = \sum_{d > e} \frac{ \tr{\alpha}{d}}{p^d}$, the \emph{$e^{\th}$ tail} of $\alpha$ (in base $p$).  We adopt the convention that $\tr{\alpha}{0} = \tr{0}{e} = \tail{0}{e} = 0$.
\end{Definition}

\begin{Lemma}
\label{Truncation: L} For $\alpha, \beta \in [0,1]$, we have the following:
\begin{enumerate}
\item $\tr{\alpha}{e} \in \frac{1}{p^e} \cdot \mathbb{N}$ and $\tr{\alpha}{e} < \alpha$.
\item $0<\tail{\alpha}{e} \leq \frac{1}{p^e}$, with equality if and only if $\alpha \in \Lattice{p^e}{2}$. 
\item $\alpha \leq \beta$ if and only if $\tr{\alpha}{e} \leq \tr{\beta}{e}$ for all $e \geq 1$.
\item If $\beta \in [0,1] \cap \frac{1}{p^e} \cdot \mathbb{N}$ and $\alpha > \beta$, then $\tr{\alpha}{e} \geq \beta$.
\end{enumerate}
\end{Lemma}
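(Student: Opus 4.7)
The plan is to handle (1) and (2) directly from the definitions, then reduce (3) and (4) to careful bookkeeping with the resulting bounds. For part (1), the membership $\tr{\alpha}{e} \in \frac{1}{p^e} \cdot \mathbb{N}$ comes from clearing denominators in $\tr{\alpha}{e} = \sum_{d=1}^{e} \digit{\alpha}{d}/p^d$, while $\tr{\alpha}{e} < \alpha$ just says $\tail{\alpha}{e} > 0$, which is forced by the hypothesis in Definition \ref{ExpansionDefiniton} that the digit sequence is not eventually zero. Part (2) then rests on the geometric estimate $\tail{\alpha}{e} = \sum_{d > e} \digit{\alpha}{d}/p^d \leq \sum_{d > e}(p-1)/p^d = 1/p^e$, with equality exactly when every $\digit{\alpha}{d} = p-1$ for $d > e$; by part (1) this rewrites as $\alpha = \tr{\alpha}{e} + 1/p^e \in \frac{1}{p^e} \cdot \mathbb{N}$, and the reverse direction follows from the uniqueness of the non-terminating expansion of an element of $\frac{1}{p^e} \cdot \mathbb{N}$.

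For part (3), I would argue the forward direction by contrapositive. If some $e_0$ satisfies $\tr{\alpha}{e_0} > \tr{\beta}{e_0}$, then by part (1) both sides sit in $\frac{1}{p^{e_0}} \cdot \mathbb{N}$, so in fact $\tr{\alpha}{e_0} \geq \tr{\beta}{e_0} + 1/p^{e_0}$. Combining this with $\alpha > \tr{\alpha}{e_0}$ from part (1) and $\beta \leq \tr{\beta}{e_0} + 1/p^{e_0}$ from part (2) yields $\alpha > \beta$. The converse is a limit statement: part (2) shows $\tr{\alpha}{e} \to \alpha$ and $\tr{\beta}{e} \to \beta$, so termwise domination of the truncations passes to the limit.

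Part (4) reduces to a single pinch. Supposing $\tr{\alpha}{e} < \beta$ for contradiction, the fact that both $\tr{\alpha}{e}$ and $\beta$ lie in $\frac{1}{p^e} \cdot \mathbb{N}$ gives $\tr{\alpha}{e} \leq \beta - 1/p^e$, which together with the tail bound $\tail{\alpha}{e} \leq 1/p^e$ from part (2) produces $\alpha \leq \beta$, contradicting the hypothesis. I do not foresee a genuine obstacle; the one subtlety worth being careful about is the equality case in part (2), where one must verify that the non-terminating expansion of an element of $\frac{1}{p^e} \cdot \mathbb{N}$ necessarily has all trailing digits equal to $p-1$, since this characterization is what allows parts (3) and (4) to extract strict inequalities from the tail bound.
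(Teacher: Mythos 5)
Your proof is correct and takes essentially the same route as the paper's: the tail bound $\tail{\alpha}{e} \leq \frac{1}{p^e}$ from part (2) together with integrality of $p^e\tr{\alpha}{e}$ does all the work, and your contradiction argument for part (4) is the paper's direct inequality $p^e\beta < p^e\tr{\alpha}{e}+1$ read inside out. (The paper itself only writes out part (4), declaring (1) and (2) immediate and leaving (3) to the reader, so you have simply filled in those routine steps in the natural way.)
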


\begin{proof}
The first two assertions follow directly from the definitions.  We now prove the last point, leaving the third for the reader.  As $\beta < \alpha$, we have that \[ p^e \beta < p^e \alpha = p^e \tr{\alpha}{e} + p^e \tail{\alpha}{e} \leq p^e \tail{\alpha}{e} + 1,\] where we have used that $\tail{\alpha}{e} \leq \frac{1}{p^e}$.  Thus, $p^e \beta < p^e \tr{\alpha}{e} + 1$, and as both sides are integers, we conclude that $p^e \beta \leq p^e \tr{\alpha}{e}$.
\end{proof}

\begin{Definition}
\label{Carrying: D}
Let $(\alpha, \beta) \in [0,1]^2$, and let $p$ be a prime number.  We say  that $e^{\th}$ digits of $\alpha$ and $\beta$ add without carrying if $\digit{\alpha}{e} + \digit{\beta}{e} \leq p-1$, and we say that $\alpha$ and $\beta$ \emph{add without carrying} (in base $p$)  if the $e^{\th}$ digits of $\alpha$ and $\beta$ add without carrying for all $e$. We  define the corresponding conditions for integers in the obvious way.
\end{Definition}

\begin{Remark}
\label{Carrying: R}  It follows from the definitions that $\alpha$ and $\beta$ add without carrying (in base $p$) if and only if the integers $p^e \tr{\alpha}{e}$ and $p^e \tr{\beta}{e}$ add without carrying for all $e \geq 1$.
\end{Remark}

\begin{Example} 
\label{ConstantExpansionCarrying: E}
Let $\alpha \in [0,1]$.  If $(p-1) \cdot \alpha \in \mathbb{N}$, then multiplying the non-terminating base $p$ expansion $1 = \sum_{e \geq 1} \frac{p-1}{p^e}$ by $\alpha$ shows that $\digit{\alpha}{e} = (p-1) \cdot \alpha$ for all $e \geq 1$.  In particular, if $(\alpha, \beta) \in [0,1]^2$ and $(p-1) \cdot (\alpha, \beta) \in \mathbb{N}^2$, then $\alpha$ and $\beta$ add without carrying (in base $p$) if and only if $\alpha + \beta \leq 1$.  
\end{Example}

\begin{Lemma}\cite{Lucas} 
\label{Lucas: L}  
 Let $k_1, k_2 \in \mathbb{N}$, and set $N = k_1 + k_2$. Then, the binomial coefficient $\binom{N}{k_1, k_2} : = \frac{ N ! }{k_1 ! k_2!} \neq 0 \bmod p$ if and only if $k_1$ and $k_2$ add without carrying (in base $p$).
\end{Lemma}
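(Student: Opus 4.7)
The plan is to compute the $p$-adic valuation $v_p\binom{N}{k_1,k_2}$ via Legendre's formula and then observe that the resulting expression vanishes precisely when $k_1$ and $k_2$ add without carrying.

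First, I would recall Legendre's formula: if $n$ is a nonnegative integer and $s_p(n)$ denotes the sum of its base-$p$ digits, then
\[
v_p(n!) \;=\; \sum_{e \geq 1} \left\lfloor \frac{n}{p^e} \right\rfloor \;=\; \frac{n - s_p(n)}{p-1}.
\]
The first equality is standard (count the multiples of $p, p^2, \ldots$ up to $n$); the second follows by substituting the base-$p$ expansion of $n$ into the floor sum and telescoping.

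Applying this to $\binom{N}{k_1,k_2} = N!/(k_1! \, k_2!)$ and using $N = k_1+k_2$ gives
\[
(p-1)\cdot v_p\!\binom{N}{k_1,k_2} \;=\; s_p(k_1) + s_p(k_2) - s_p(N).
\]
Since the binomial coefficient is a positive integer and $p-1 > 0$, the right-hand side is automatically a nonnegative multiple of $p-1$. Consequently, $\binom{N}{k_1,k_2} \not\equiv 0 \pmod p$ if and only if $s_p(N) = s_p(k_1) + s_p(k_2)$.

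Finally, I would verify that this digit-sum identity is equivalent to $k_1$ and $k_2$ adding without carrying. One direction is immediate: if no carry occurs at any position, then the $e^{\th}$ digit of $N$ equals $k_1^{(e)} + k_2^{(e)}$ for every $e$, so summing yields the identity. For the converse, I would analyze the schoolbook addition algorithm, tracking the carry sequence $c_e \in \{0,1\}$; a direct computation shows that each occurrence of a carry contributes exactly $p-1$ to the defect, giving $s_p(k_1) + s_p(k_2) - s_p(N) = (p-1) \sum_e c_e$, which vanishes iff every $c_e$ vanishes. The main (and only) bookkeeping obstacle is this carry analysis, but it is elementary and local at each digit.
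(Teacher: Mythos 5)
The paper does not actually prove this lemma; it simply cites it to \cite{Lucas} and takes it as known. So there is no paper proof to compare against. Your argument is a correct and complete proof, proceeding via Legendre's formula for $v_p(n!)$ to establish the identity $(p-1)\,v_p\binom{N}{k_1,k_2}=s_p(k_1)+s_p(k_2)-s_p(N)$, and then identifying the right-hand side as $(p-1)$ times the number of carries in the base-$p$ addition of $k_1$ and $k_2$. This is essentially Kummer's theorem, and it is arguably the most transparent route to the stated ``nonzero mod $p$ iff no carrying'' corollary. The classical Lucas argument would instead expand $(1+x)^N \equiv \prod_e (1+x^{p^e})^{N^{(e)}} \pmod p$ and compare coefficients, which yields the stronger digit-product congruence $\binom{N}{k_1}\equiv\prod_e\binom{N^{(e)}}{k_1^{(e)}} \pmod p$; your approach gives only the vanishing criterion, but that is all the lemma asserts, and the Legendre/Kummer route avoids generating functions entirely. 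The one step you flag as ``bookkeeping''---that each carry contributes exactly $p-1$ to the digit-sum defect---does deserve the one-line induction on the carry recursion $k_1^{(e)}+k_2^{(e)}+c_{e-1}=N^{(e)}+p\,c_e$ (summing over $e$ gives $s_p(k_1)+s_p(k_2)-s_p(N)=(p-1)\sum_e c_e$), but you have correctly identified the shape of that computation.
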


\begin{Lemma}
\label{Carrying: L}
Consider $ \( \alpha, \beta \) \in [0,1]^2$, and suppose that $\alpha + \beta \leq 1$.  If $\digit{\alpha}{L+1} + \digit{\beta}{L+1} \geq p$, then $\tr{\alpha}{L} + \tr{\beta}{L} + \frac{1}{p^L} = \tr{\alpha + \beta}{L}$.
\end{Lemma}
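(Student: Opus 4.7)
The plan is to set $A := \tr{\alpha}{L}$ and $B := \tr{\beta}{L}$ and prove the identity by a two-sided sandwich on $\tr{\alpha + \beta}{L}$. The driving observation is that $\tr{\alpha + \beta}{L}$ and $A + B + \frac{1}{p^L}$ both lie in $\frac{1}{p^L}\mathbb{N}$, so any strict real inequality between them is automatically a gap of at least $\frac{1}{p^L}$; this ``integer-gap'' mechanism will be what collapses the sandwich into equality.

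For the lower bound, I would first translate the carrying hypothesis into a statement about truncations.  Using the identity $\digit{\alpha}{L+1} = p^{L+1}(\tr{\alpha}{L+1} - A)$ and its analogue for $\beta$, the assumption $\digit{\alpha}{L+1} + \digit{\beta}{L+1} \geq p$ rearranges to $\tr{\alpha}{L+1} + \tr{\beta}{L+1} \geq A + B + \frac{1}{p^L}$. Combined with the strict bounds $\tr{\alpha}{L+1} < \alpha$ and $\tr{\beta}{L+1} < \beta$ from Lemma \ref{Truncation: L}(1), this forces $\alpha + \beta > A + B + \frac{1}{p^L}$.  In particular $A + B + \frac{1}{p^L} < \alpha + \beta \leq 1$, so $A + B + \frac{1}{p^L}$ lies in $[0,1] \cap \frac{1}{p^L}\mathbb{N}$, and Lemma \ref{Truncation: L}(4) then yields $\tr{\alpha + \beta}{L} \geq A + B + \frac{1}{p^L}$.

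For the upper bound, I would combine $\alpha = A + \tail{\alpha}{L}$, $\beta = B + \tail{\beta}{L}$, and the tail estimates $\tail{\alpha}{L}, \tail{\beta}{L} \leq \frac{1}{p^L}$ from Lemma \ref{Truncation: L}(2) to get $\alpha + \beta \leq A + B + \frac{2}{p^L}$. Since $\tr{\alpha + \beta}{L} < \alpha + \beta$ by Lemma \ref{Truncation: L}(1), this gives the strict inequality $\tr{\alpha + \beta}{L} < A + B + \frac{2}{p^L}$, and the integer-gap principle improves it to $\tr{\alpha + \beta}{L} \leq A + B + \frac{1}{p^L}$, matching the lower bound.

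The only non-routine step will be converting the carrying hypothesis at position $L+1$ into the strict real inequality $\alpha + \beta > A + B + \frac{1}{p^L}$; after that, the argument is essentially bookkeeping to check the hypotheses of Lemma \ref{Truncation: L}(4) and to apply the integer-gap observation on each side.
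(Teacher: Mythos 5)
Your proof is correct, and it rests on the same ingredients as the paper's argument: the carrying hypothesis is used to force the level-$L$ tails (or equivalently the level-$(L+1)$ truncations) to push $\alpha+\beta$ strictly past $A+B+\frac{1}{p^L}$, the coarse estimate $\tail{\alpha}{L}+\tail{\beta}{L}\leq\frac{2}{p^L}$ caps it from above, and integrality in $\frac{1}{p^L}\mathbb{N}$ pins down the answer. The paper bundles both directions into a single step by multiplying the identity $\tr{\alpha+\beta}{L}+\tail{\alpha+\beta}{L}=\tr{\alpha}{L}+\tr{\beta}{L}+\tail{\alpha}{L}+\tail{\beta}{L}$ by $p^L$ and rounding up, whereas you split it into a two-sided sandwich with the lower bound handled by Lemma~\ref{Truncation: L}(4); these are mild organizational variants of the same computation.
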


\begin{proof}

By definition,
\begin{equation} \label{twodigitsadding1: e} \tr{\alpha + \beta}{L} + \tail{\alpha + \beta}{L} = \alpha + \beta = \tr{\alpha}{L} + \tr{\beta}{L} + \tail{\alpha}{L} + \tail{\beta}{L}.\end{equation}

  As the digits appearing in a base $p$ expansion are less than or equal to $p-1$, 
\begin{align*}
\frac{1}{p^L} < \tail{\alpha}{L} + \tail{\beta}{L}  & = \frac{1}{p^L} + \frac{\digit{\alpha}{L+1} + \digit{\beta}{L+1} - p}{p^{L+1}} + \tail{\alpha}{L+1} + \tail{\beta}{L+1} \\ 
  & \leq \frac{1}{p^L} + \frac{2p-2 - p}{p^{L+1}} + \frac{1}{p^{L+1}} + \frac{1}{p^{L+1}} = \frac{2}{p^L},
\end{align*}
\noindent so that $\up{p^L \tail{\alpha}{L} + p^L \tail{\beta}{L}} = 2$.  Thus, multiplying \eqref{twodigitsadding1: e} by $p^L$ and rounding up, keeping in mind that $ 0 < \tail{\alpha + \beta}{L} \leq \frac{1}{p^L}$, shows that $p^L \tr{\alpha + \beta}{L} + 1 = p^L \tr{\alpha}{L} + p^L \tr{\beta}{L} + 2$.
\end{proof}

\section{$F$-pure thresholds and splitting polytopes}

\subsection{$F$-pure thresholds of polynomials}

Recall that a field $\L$ of positive characteristic is said to be \emph{$F$-finite} if $[\L:\L^p] < \infty$.  We will assume that all fields of positive characteristic are $F$-finite.  Fix such a field, and let $f \in \L[x_1, \cdots, x_m]$ be a polynomial vanishing at the origin, so that $f \in \m :=(x_1, \cdots, x_m)$.  For every $e \geq 1$, let $\bracket{\m}{e}$ denote $(x_1^{p^e}, \cdots, x_m^{p^e})$, the \emph{$e^{\th}$ Frobenius power of $\m$}.  


\begin{Definition} \cite{MTW2005, BMS2008} 
\label{FPT: D}
The limit $\fpt{f}:= \lim_{e \to \infty} \frac{1}{p^e} \cdot \max \set{ l : f^l \notin \bracket{\m}{e}}$ exists and is contained in $(0,1] \cap \mathbb{Q}$.  We call this number the \emph{$F$-pure threshold} of $f$ at $\m$.  
\end{Definition}

Lemma \ref{FPTTruncation: L} allows us to recover the terms $\max \set{ l : f^l \notin \bracket{\m}{e}}$ from their limiting value.

\begin{Lemma}
\label{FPTTruncation: L}  Let $f \in K[x_1, \cdots, x_m]$ be a polynomial vanishing at $\0$.  Then 
\[ \tr{\fpt{f}}{e} = \frac{ \max \set{ l : f^l \notin \bracket{\m}{e}}}{p^e}.\] 
\end{Lemma}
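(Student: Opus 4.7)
The plan is to show $p^e \tr{c}{e} = \nu_e$ by two inequalities, where I write $c := \fpt{f}$ and $\nu_e := \max \set{l : f^l \notin \bracket{\m}{e}}$.  The starting point is the classical sandwich $\nu_e/p^e \leq c \leq (\nu_e+1)/p^e$.  The lower bound follows from Frobenius applied to $f^{\nu_e}$:  any monomial of $f^{\nu_e}$ with every exponent less than $p^e$ produces one in $(f^{\nu_e})^p = f^{p\nu_e}$ with every exponent less than $p^{e+1}$, so $\nu_{e+1} \geq p\nu_e$ and $\nu_e/p^e$ is non-decreasing.  The upper bound follows symmetrically from $(f^{\nu_e+1})^p \in \bracket{\m}{e+1}$, giving $\nu_{e+1} \leq p(\nu_e+1) - 1$ and hence $(\nu_e+1)/p^e$ non-increasing.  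Both sequences converge to $c$.

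The easy direction $p^e\tr{c}{e} \leq \nu_e$ is immediate from $\tr{c}{e} < c \leq (\nu_e+1)/p^e$ (Lemma \ref{Truncation: L}(1) combined with the sandwich), since both $\tr{c}{e}$ and $(\nu_e+1)/p^e$ lie in $\frac{1}{p^e} \cdot \mathbb{N}$.

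For the reverse direction $p^e\tr{c}{e} \geq \nu_e$, I would argue by contradiction.  If $p^e\tr{c}{e} < \nu_e$, then $\tr{c}{e} + 1/p^e \leq \nu_e/p^e$; combined with $c \leq \tr{c}{e} + 1/p^e$ from Lemma \ref{Truncation: L}(2) and the lower part of the sandwich, this forces $c = \nu_e/p^e$, and then the sandwich at level $e+k$ together with the monotonicity $\nu_{e+k} \geq p^k \nu_e$ forces $\nu_{e+k} = p^k \nu_e$ for every $k \geq 0$.  In particular, $f^{p^k \nu_e + 1} = f \cdot (f^{\nu_e})^{p^k} \in \bracket{\m}{e+k}$ for every $k$.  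The main obstacle is to derive a contradiction from this.  Write $f^{\nu_e} = g_0 + g_1$, where $g_0 \neq 0$ collects the monomials whose exponents all lie below $p^e$ and $g_1 \in \bracket{\m}{e}$; since $g_1^{p^k} \in \bracket{\m}{e+k}$, the containment reduces to $f g_0^{p^k} \in \bracket{\m}{e+k}$.  For $k$ large enough that $p^k > \deg_{x_i} f$ for every $i$, the expansion $f g_0^{p^k} = \sum f_b\, (c_a)^{p^k}\, x^{b + p^k a}$ (over $b \in \operatorname{supp} f$ and $a \in \operatorname{supp} g_0$) has no cancellations --- the pair $(b,a)$ is recovered from $b+p^k a$ by division with remainder by $p^k$ --- and every exponent satisfies $b_i + p^k a_i \leq (p^k - 1) + p^k(p^e - 1) < p^{e+k}$, so every monomial of $f g_0^{p^k}$ lies outside $\bracket{\m}{e+k}$, contradicting the membership we derived.
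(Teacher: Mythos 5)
Your proof is correct, and it does more than the paper itself: the paper disposes of this lemma with a citation to \cite{MTW2005} and \cite{Polynomials}, whereas you supply a complete, self-contained argument. The two monotonicity facts that give the sandwich $\nu_e/p^e \leq c \leq (\nu_e+1)/p^e$ are the standard Frobenius-additivity observations, and the easy inequality $p^e\tr{c}{e} \leq \nu_e$ is an immediate integrality consequence. The real content beyond the sandwich is the \emph{strict} inequality $\nu_e/p^e < c$ for every $e$, which is precisely what pins down $\tr{c}{e}$, and your contradiction argument supplies it. The key idea is the no-cancellation step: after splitting $f^{\nu_e} = g_0 + g_1$ with $g_0 \neq 0$ supported in degrees $< p^e$ and $g_1 \in \bracket{\m}{e}$, once $p^k$ exceeds every partial degree of $f$ the product $f g_0^{p^k}$ has pairwise distinct exponents $b + p^k a$ (recoverable by division with remainder) with each coordinate strictly below $p^{e+k}$, so $f g_0^{p^k} \notin \bracket{\m}{e+k}$, contradicting $\nu_{e+k} = p^k \nu_e$. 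This is the same ``supports do not interact under a large Frobenius scaling'' idea that reappears in Lemma \ref{reducetosupport: L}, but your version works for an arbitrary polynomial rather than only a binomial. The only hypotheses used are the non-terminating base-$p$ convention (needed for the strict inequality in Lemma \ref{Truncation: L}(1)) and $f \neq 0$, both in force. No gaps.
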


\begin{proof}
This is a restatement of \cite{MTW2005} in the language of base $p$ expansions.  See \cite{Polynomials} for a generalization.
\end{proof}

\begin{Lemma}
\label{FPTDifferentVariables: L}  Let $\m$ (respectively, $\mathfrak{n}$) denote ideal generated by the variables in $\L[x_1, \cdots, x_m]$ (respectively, $\L[y_1, \cdots, y_n]$).  If $f \in \m$ and $g \in \mathfrak{n}$, then $\localfpt{\m + \mathfrak{n}}{fg} = \min \{ \localfpt{\m}{f} , \localfpt{\mathfrak{n}}{g} \}$.
\end{Lemma}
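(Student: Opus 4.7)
The plan is to compute $\max\{l : (fg)^l \notin (\m + \mathfrak{n})^{[p^e]}\}$ directly in terms of the analogous quantities for $f$ and $g$ separately, and then pass to the limit using Definition \ref{FPT: D}.

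First I would set $S = \L[x_1, \dots, x_m, y_1, \dots, y_n]$ and observe that $(\m + \mathfrak{n})^{[p^e]} = \m^{[p^e]} S + \mathfrak{n}^{[p^e]} S$, so that the quotient decomposes as
\[ S / (\m + \mathfrak{n})^{[p^e]} \cong \L[\x]/\m^{[p^e]} \tens_{\L} \L[\y]/\mathfrak{n}^{[p^e]}. \]
Under this isomorphism the class of $(fg)^l = f^l g^l$ corresponds to $\overline{f^l} \tens \overline{g^l}$.

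The key step is the following observation: since $\L[\x]/\m^{[p^e]}$ and $\L[\y]/\mathfrak{n}^{[p^e]}$ are both free $\L$-modules (with monomial bases), an elementary tensor $\overline{u} \tens \overline{v}$ vanishes in $\L[\x]/\m^{[p^e]} \tens_{\L} \L[\y]/\mathfrak{n}^{[p^e]}$ if and only if $\overline{u} = 0$ or $\overline{v} = 0$. Applied to our situation, this shows
\[ (fg)^l \in (\m + \mathfrak{n})^{[p^e]} \iff f^l \in \m^{[p^e]} \text{ or } g^l \in \mathfrak{n}^{[p^e]}, \]
and taking contrapositives yields
\[ \max \set{ l : (fg)^l \notin (\m + \mathfrak{n})^{[p^e]}} = \min \bigl( \max \set{ l : f^l \notin \m^{[p^e]}}, \ \max \set{ l : g^l \notin \mathfrak{n}^{[p^e]}} \bigr). \]

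To finish, I would divide both sides by $p^e$ and let $e \to \infty$, invoking Definition \ref{FPT: D} and the fact that $\min$ is continuous; the three limits exist, so the identity $\localfpt{\m + \mathfrak{n}}{fg} = \min\{\localfpt{\m}{f}, \localfpt{\mathfrak{n}}{g}\}$ follows. The only subtlety, and the step that deserves the most care in writing up, is justifying the tensor-product vanishing criterion, which ultimately rests on the freeness of $\L[\x]/\m^{[p^e]}$ and $\L[\y]/\mathfrak{n}^{[p^e]}$ as $\L$-modules; everything else is bookkeeping.
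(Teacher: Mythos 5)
Your proof is correct, and it takes a more self-contained route than the paper's. Both arguments rest on the same fact --- that since $f$ and $g$ involve disjoint sets of variables, $(fg)^l$ survives modulo $\bracket{(\m+\mathfrak{n})}{e}$ precisely when $f^l$ and $g^l$ both survive separately --- but you make this fact precise via the decomposition $S/\bracket{(\m+\mathfrak{n})}{e} \cong \L[\x]/\bracket{\m}{e} \tens_{\L} \L[\y]/\bracket{\mathfrak{n}}{e}$ and the vanishing criterion for elementary tensors over a field, which yields an exact identity of the finite-level maxima, after which the limit is immediate. The paper instead fixes $\lambda_1 = \localfpt{\m}{f} \leq \lambda_2 = \localfpt{\mathfrak{n}}{g}$ and argues via truncations, invoking Lemma~\ref{Truncation: L} to compare $\tr{\lambda_1}{e}$ with $\tr{\lambda_2}{e}$ and Lemma~\ref{FPTTruncation: L} to translate between truncations and the quantities $\max\set{l : f^l \notin \bracket{\m}{e}}$; the crucial step ``as $f$ and $g$ are in different sets of variables, $(fg)^{p^e\tr{\lambda_1}{e}} \notin \bracket{(\m+\mathfrak{n})}{e}$'' is asserted there without further justification, and your tensor argument is exactly what fills that gap. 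Your version buys transparency and independence from the truncation machinery; the paper's buys brevity by reusing lemmas it already needs for the main theorem.
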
 

\begin{proof}  Let $\lambda_1 = \localfpt{\m}{f}$ and $\lambda_2 = \localfpt{\mathfrak{n}}{g}$, and suppose that $\lambda_1 \leq \lambda_2$. We now show that $\localfpt{\m +  \mathfrak{n}}{fg} = \lambda_1$.  As $\lambda_1 \leq \lambda_2$,  it follows from Lemma \ref{Truncation: L} that $\tr{\lambda_1}{e} \leq \tr{\lambda_2}{e}$ for all $e \geq 1$, and so by Lemma \ref{FPTTruncation: L} we have that $f^{p^e \tr{\lambda_1}{e}} \notin \bracket{\m}{e}$ and $g^{p^e \tr{\lambda_1}{e}} \notin \bracket{\mathfrak{n}}{e}$.  As $f$ and $g$ are in different sets of variables, it follows that $(fg)^{p^e \tr{\lambda_1}{e}} \notin \bracket{\( \m + \mathfrak{n} \) }{e} $.   By Lemma \ref{FPTTruncation: L}, we know that $f^{p^e \tr{\lambda_1}{e} + 1} \in \bracket{\m}{e}$, and so $(fg)^{p^e \tr{\lambda_1}{e} + 1} \in \bracket{\m}{e} \subseteq \bracket{ \( \m + \mathfrak{n} \) }{e}$.  We conclude that \[ p^e \tr{\lambda_1}{e} = \max \{ l : (fg)^{l} \notin \bracket{ \( \m + \mathfrak{n} \) }{e} \} = p^e \tr{\localfpt{\m + \mathfrak{n}}{fg}}{e},\] where the last inequality holds by \ref{FPTTruncation: L}.  The claim follows by letting $e \to \infty$.
\end{proof}

\subsection{Splitting polytopes dimension two}

We begin by reviewing some standard conventions from convex geometry. Recall that a set $\mathscr{P} \subseteq \mathbb{R}^2$ is called a \emph{rational polyhedron} if there exist finitely many linear forms $L_1, \cdots, L_d \in \mathbb{Q}[t_1,t_2]$ such that \[\mathscr{P} = \set{ \s \in \mathbb{R}^2:  L_i (\s) \leq 1 \text{ for all } 1 \leq i \leq d}.\]  A bounded rational polydedron is called a \emph{rational polytope}.  If $L \in \mathbb{Q}[t_1, t_2]$ and $\beta \in \mathbb{Q}$, the set $H^{\beta}_L:=\set{ \s : L(\s) \leq \beta}$ is called a \emph{supporting halfspace} of $\mathscr{P}$ if $\mathscr{P} \subseteq H^{\beta}_L$ and $L^{-1}(\beta) \cap \mathscr{P}$ is non-empty.  In this case, we call the set $L^{-1}(\beta) \cap \mathscr{P}$ a \emph{face} of $\mathscr{P}$.  An element $\bold{\text{v}}$ is called a \emph{vertex} of $\mathscr{P}$ if $\set{ \bold{\text{v}} }$ is a face of $\mathscr{P}$.

\begin{Notation}
\label{UniNotation: N}
Let $\sM = \set{ \x^{\a}, \x^{\b}}$ denote a collection of distinct monomials in the variables $x_1, \cdots, x_m$ such that every variable appears in either $\x^{\a}$ or $\x^{\b}$.  We  use $\vl$ and $\vleq$ to denote componentwise inequality in $\mathbb{R}^m$,  $\dotp$ to denote the standard dot product on $\mathbb{R}^2$, and $\vone_m$ to denote the element $(1, \cdots, 1) \in  \mathbb{N}^m$.
\end{Notation}

\begin{Definition} 
\label{SplittingPolytopeDefinition}
Let $\EM$ denote the $m \times 2$ matrix $( \a \ \b)$.  We call $\EM$ the \emph{splitting matrix} of $\sM$, and $\P = \set{ \s \vgeq \0 : \EM \s \vleq \vone_m}$ the \emph{splitting polytope} of $\sM$.  As $\a, \b \in \mathbb{N}^m$, we see that $\P \subseteq [0,1]^2$ a rational polytope.
\end{Definition}

\begin{figure}[h]
\begin{minipage}[h]{2.5in}
\begin{center}
\psset{xunit=900pt, yunit=900pt}
\begin{pspicture}(0,-.01)(.18,.16)
\psaxes[ticks=none]{->}(0,0)(0,0)(.17,.14)[$s_1$,270][$s_2$,180]
\psline(0,0)(0,.1111)(.0357, .1071)(.1,.075)(.1428,0)(0,0)
\psdots[dotsize=6pt](0,0)(0,.1111)(.0357, .1071)(.1,.075)(.1428,0)(0,0)

\uput[180](0,.1111){$\(0,\frac{1}{9}\)$}
\uput[45](.0357, .1071){$\( \frac{1}{28}, \frac{3}{28} \)$}
\uput[45](.1,.075){$\( \frac{1}{10}, \frac{3}{40} \)$}
\uput[55](.1428,0){$\( \frac{1}{7}, 0 \)$}

\end{pspicture}
\end{center}
\end{minipage}
\begin{minipage}[h]{2.5in}
\[\P = \set{ (s_1, s_2) \vgeq \0 : 
\begin{array}{l}
       \hspace{5pt} s_1 + 9 s_2 \leq 1 \\
       4 s_1 + 8 s_2 \leq 1 \\
       7s_1 + 4 s_2 \leq 1
\end{array}
}
\]
\end{minipage}
\caption{The splitting polytope of  $\set{ x y^{4} z^7, x^9 y^8 z^4}$. }
\label{PolytopeFig: f}
\end{figure}
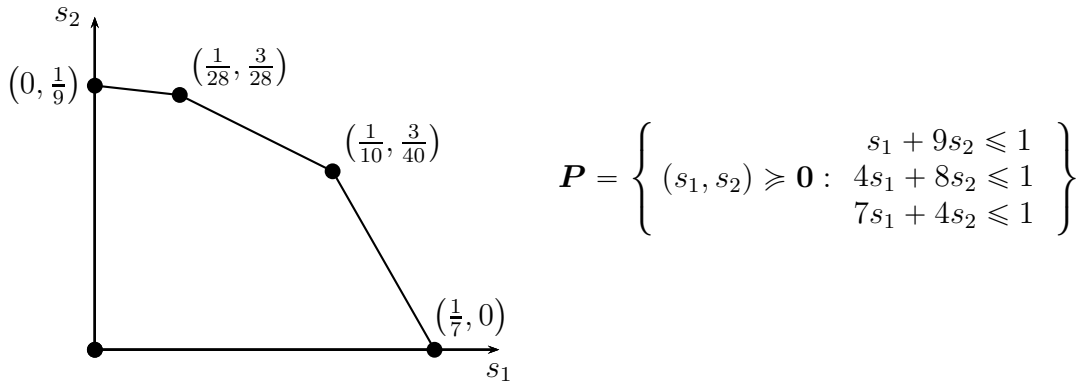


\begin{Definition}
\label{MaximalPoint: D}
We call the set $\set { \s \in \P : \EM \s \vl \vone_m}$ the \emph{lower interior} of $\P$, and we denote it by $\LIP$.  An element $\eeta  \in \P$ is called a \emph{maximal point of $\P$} if $|\eeta| = \max \left \{ | \s | : \s \in \P \right \}$.
\end{Definition}

Set $\alpha = \max \set{ | \s | : \s \in \ P}$.  By definition, $H_{\max}:=\set{ \s : \( \frac{1}{\alpha} , \frac{1}{\alpha} \) \dotp \s \leq 1 }$ is a supporting halfspace of $\P$, but is not one the defining halfspaces of $\P$ unless there exists a row $(a_i, b_i)$ of $\EM$ with $a_i = b_i = \frac{1}{\alpha}$.  Whenever $a_i \neq b_i$ for all rows $(a_i, b_i)$, it follows that the face determined by $H_{\max}$ must be a vertex of $\P$.  We record this observation below.

\begin{Lemma}  
\label{UniquePointCriteria: L}
If $\EM$ has no constant rows, then $\P$ has a unique maximal point.
\end{Lemma}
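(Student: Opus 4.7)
The plan is to formalize the observation sketched in the paragraph preceding the lemma: namely, that the face of $\P$ cut out by $H_{\max}$ must be zero-dimensional (a vertex) when no defining halfspace of $\P$ has the same outer normal as $H_{\max}$.

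First, I would verify that $\alpha := \max\{|\s|:\s\in\P\}>0$. Because $\sM$ consists of monomials in which every variable appears, each row $(a_i,b_i)$ of $\EM$ has at least one positive entry, so $\( \epsilon, \epsilon \)\in\P$ for all sufficiently small $\epsilon>0$; hence $\alpha>0$ and the halfspace $H_{\max}=\{\s:\(\tfrac{1}{\alpha},\tfrac{1}{\alpha}\)\dotp\s\leq1\}$ is well defined and, by choice of $\alpha$, genuinely supporting. Let $F:=H_{\max}^{-1}(1)\cap\P$ be the corresponding face. Since $\P\subseteq\mathbb{R}^2$ is a rational polytope, $F$ is either a single point (a vertex) or a one-dimensional face (an edge).

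Next, I would rule out the edge case. The defining halfspaces of $\P$ are the coordinate halfspaces $\{s_1\geq0\}$ and $\{s_2\geq0\}$ together with $\{a_i s_1+b_i s_2\leq1\}$ for the rows $(a_i,b_i)$ of $\EM$. If $F$ were one-dimensional, it would have to coincide with an edge of $\P$, which forces the outer normal $\(\tfrac{1}{\alpha},\tfrac{1}{\alpha}\)$ of $H_{\max}$ to be a positive scalar multiple of the outer normal of one of the defining halfspaces. The coordinate halfspaces have outer normals $(-1,0)$ and $(0,-1)$, which are obviously not positive multiples of $\(\tfrac{1}{\alpha},\tfrac{1}{\alpha}\)$, so $F$ would have to lie on $\{a_i s_1+b_i s_2=1\}$ for some $i$ with $(a_i,b_i)$ proportional to $(1,1)$, i.e.\ with $a_i=b_i$. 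This contradicts the hypothesis that $\EM$ has no constant rows, so $F$ must be a vertex, giving the unique maximal point.

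I expect the only subtlety to be a careful handling of the edge-versus-vertex dichotomy for faces in $\mathbb{R}^2$ and the matching of outer normals up to positive scalar; the remainder is just bookkeeping. No delicate estimates are required, so this should remain a short argument.
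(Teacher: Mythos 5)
Your proof is correct and essentially reproduces the argument the paper gives in the paragraph immediately preceding the lemma: a supporting halfspace with normal direction $(1,1)$ must cut out a vertex unless its normal matches (up to positive scaling) the normal of one of the defining halfspaces, i.e.\ unless some row of $\EM$ is constant. You simply fill in the small bookkeeping steps the paper leaves implicit — that $\alpha>0$, that a face of a planar polytope is a vertex or an edge, and that the coordinate halfspaces never have normal proportional to $(1,1)$.
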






\begin{Definition}  
\label{SubPolytopes: D}
Suppose that $\P$ has a unique maximal point $\eeta = \( \eta_1, \eta_2 \) \in \P$, and set \[ \ulP: =\set{ \s \in \P : s_2 \geq \eta_2}
\text{ and } \lrP:= \set{ \s \in \P : s_1 \geq \eta_1}.\]  
Note that this gives a decomposition $\P = \ulP 
 \cup \lrP \cup \set{ \s  \in \P : \s \vleq \eeta}$.
\end{Definition}

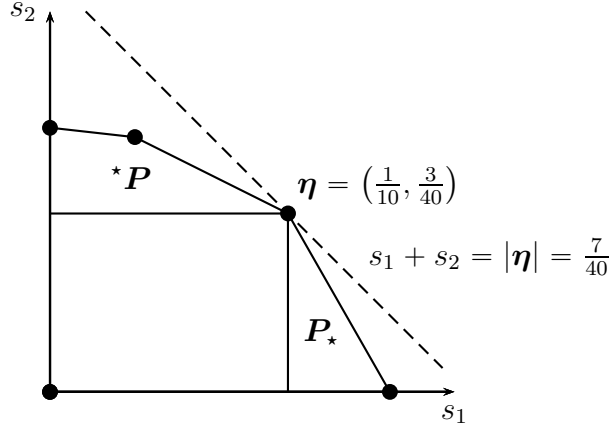
\begin{figure}[h]
\begin{center}
\psset{xunit=900pt, yunit=900pt}
\begin{pspicture}(0,-.01)(.18,.16)
\psaxes[ticks=none]{->}(0,0)(0,0)(.17,.16)[$s_1$,270][$s_2$,180]
\psline(0,0)(0,.1111)(.0357, .1071)(.1,.075)(.1428,0)(0,0)
\psplot[linestyle=dashed]{.015}{.165}{x neg .175 add}
\psline(0,.075)(.1,.075)
\psline(.1,0)(.1,.075)
\psdots[dotsize=6pt](0,0)(0,.1111)(.0357, .1071)(.1,.075)(.1428,0)(0,0)


\uput[0](.02,.09){$\ulP$}
\uput[0](.101,.025){$\lrP$}

\uput[45](.1,.075){$\eeta= \( \frac{1}{10}, \frac{3}{40} \)$}

\uput[45](.13,.045){$s_1 + s_2 = | \eeta | = \frac{7}{40} $}
\end{pspicture}
\end{center}

\caption{The decomposition of $\P$ in Figure \ref{PolytopeFig: f}}
\label{Decomposition: f}
\end{figure}

In Figure \ref{Decomposition: f}, we see that the faces of $\P$ with slope less than $-1$ correspond to  faces of $\lrP$, while those with slope between $0$ and $-1$ correspond to faces of $\ulP$.  This observation holds in general, and is the key idea behind the following lemma. 

\begin{Lemma}
\label{PrelimSimplifying: L}
Suppose that  $\P$ contains a unique maximal point $\eeta$.  If $\s \in \ulP$, then $  \s \in \LIP$ if and only if $(a_i, b_i) \dotp \s < 1$ for all $i$ with $b_i  > a_i$.  
\end{Lemma}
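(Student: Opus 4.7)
The forward direction is immediate, since $\s \in \LIP$ means that every row of $\EM$ is strict at $\s$, and in particular those with $b_i > a_i$. The converse is the real content, and my plan is to show that for every $\s \in \ulP$, each row with $a_i \geq b_i$ is automatically strict at $\s$; the claimed biconditional then follows at once from the definition of $\LIP$.

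To carry this out I will compare $\s$ to $\eeta$ through the displacement $u = s_1 - \eta_1$, $v = s_2 - \eta_2$. Membership in $\ulP$ forces $v \geq 0$, while the uniqueness of $\eeta$ as the maximizer of $s_1 + s_2$ on $\P$ yields $u + v \leq 0$, with strict inequality whenever $\s \neq \eeta$; together these inequalities force $u \leq -v \leq 0$ (so $s_1 \leq \eta_1$).

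The central identity, valid for any row $i$ with $a_i \geq b_i$, is
\[
(a_i, b_i) \dotp \s - (a_i, b_i) \dotp \eeta \;=\; a_i u + b_i v \;=\; (a_i - b_i)\,u \,+\, b_i\,(u+v),
\]
which expresses the difference as a sum of two nonpositive terms, using $a_i - b_i \geq 0$, $u \leq 0$, $b_i \geq 0$, and $u + v \leq 0$. Combined with $(a_i, b_i) \dotp \eeta \leq 1$ this already gives $(a_i, b_i) \dotp \s \leq 1$, and the main work is upgrading this to a strict inequality when $\s \neq \eeta$. The strictness $u + v < 0$ together with $v \geq 0$ forces $u < 0$; then a brief case split on the sign of $b_i$ finishes it: if $b_i > 0$ the summand $b_i(u+v)$ is strictly negative, and if $b_i = 0$ (so the row is nontrivial only when $a_i > 0$) the summand $(a_i - b_i) u = a_i u$ is strictly negative. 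The only obstacle I anticipate is this sign bookkeeping, which is routine once the inequalities on $u$ and $v$ have been pinned down; no further geometric input beyond the maximality of $\eeta$ and the nonnegativity of $\EM$ is required.
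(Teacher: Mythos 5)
Your argument is essentially the paper's, rephrased directly rather than by contradiction: the identity $(a_i,b_i)\dotp(\s - \eeta) = (a_i - b_i)u + b_i(u+v)$ is the same decomposition $(a_i,b_i) = (a_i - b_i, 0) + (b_i, b_i)$ that drives the paper's inequality chain, fed by the same two inputs $v \geq 0$ (from $\s \in \ulP$) and $u + v \leq 0$ (from maximality of $\eeta$). If anything your write-up is a bit more careful than the printed one: your case split on $b_i$ cleanly handles $b_i = 0$, where the paper's displayed chain multiplies $|\s| < |\eeta|$ by $b$ and thus loses strictness, and it also covers constant rows $a_i = b_i$, which the paper's argument silently restricts away by discussing only rows with $a_i > b_i$.

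There is one real omission. Your plan asserts that every row with $a_i \geq b_i$ is strict at \emph{every} $\s \in \ulP$, but your argument needs $u + v < 0$, which holds only when $\s \neq \eeta$. At $\s = \eeta$ the displacement vanishes, your identity gives only $(a_i,b_i)\dotp\eeta \leq 1$, and this is equality for at least one row, since $\eeta$ is a boundary vertex. So the claim, and with it ``the biconditional then follows at once,'' does not hold as stated when $\s = \eeta$. The paper deals with this by first disposing of $\s = \eeta$ in one sentence (observing that $\eeta$ satisfies neither side of the biconditional) and then working with $\s \neq \eeta$ throughout. You should insert that reduction; once $\s \neq \eeta$ is in force, your computation closes the converse.
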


\begin{proof}
We may assume that $\s \neq \eeta$, as $\eeta$ does not satisfy either of the conditions. It suffices to show that if $\eeta \neq \s \in \ulP$, then $\s$ automatically satisfies the conditions $(a_i, b_i) \dotp \s < 1$ for all $i$ with $a_i > b_i$.  By means of contradiction, suppose otherwise, so that there exists a row $(a_i, b_i)$ with $a_i > b_i$ such that $(a_i, b_i) \dotp \s \geq 1$.  For simplicity of notation, we denote this row by $(a, b)$ with $a>b$.  As $\eeta \in \P$, 
\begin{equation} \label{prelimsimplify1: e} (a, b) \dotp \eeta \leq 1 \leq (a,b) \dotp \s = (a-b, 0) \dotp \s + (b,b) \dotp \s  < (a-b,0) \dotp \s + (b,b) \dotp \eeta, \end{equation}
\noindent where we have used in \eqref{prelimsimplify1: e}, keeping in mind that $\s \neq \eeta$, that \[ (b,b) \dotp \s = b \cdot | \s | <  b \cdot |\eeta|  = (b,b) \dotp \eeta.\]  It follows from \eqref{prelimsimplify1: e} that $(a-b,0) \dotp \eeta < (a-b, 0) \dotp \s$, and as $a  > b$, we conclude that $\eta_1 < s_1$.  However, substituting this into the inequality $| \s | < | \eeta |$ implies that $s_2 < \eta_2$, contradicting the fact that $\s \in \ulP$.
\end{proof}

The following lemma will be especially important in the proof of Theorem \ref{BinomialMainTheorem: T}.  

\begin{Lemma}
\label{GrandSimplifying: L}
Suppose that $\P$ contains a unique maximal point $\eeta$, and let  $\delta \geq 0$.  There exists $\gg = (\gamma_1, \gamma_2) \in \LIP$ with $\gamma_2 \geq \tr{\eta_2}{\dee} + \frac{1}{p^{\dee}}$ and $| \gg | = \tr{\eta_1}{d} + \tr{\eta_2}{d} + \frac{1}{p^d} + \delta$ if and only if $\tr{\eeta}{d} + \(\delta,  \frac{1}{p^d}  \) \in \LIP$.  
\end{Lemma}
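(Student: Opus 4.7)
The plan is to treat the two directions separately. The reverse direction ($\Leftarrow$) is immediate: setting $\tau := \tr{\eeta}{\dee} + \(\delta, \frac{1}{p^{\dee}}\)$ and assuming $\tau \in \LIP$, the witness $\gg = \tau$ works, since by construction $\tau_2 = \tr{\eta_2}{\dee} + \frac{1}{p^{\dee}}$ and $|\tau| = \tr{\eta_1}{\dee} + \tr{\eta_2}{\dee} + \frac{1}{p^{\dee}} + \delta$.

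For the forward direction ($\Rightarrow$), assume $\gg = (\gamma_1, \gamma_2) \in \LIP$ is given with the stated properties, and again set $\tau := \tr{\eeta}{\dee} + \(\delta, \frac{1}{p^{\dee}}\)$. I would extract three key facts. First, $|\tau| = |\gg|$ directly from the hypothesis on $|\gg|$. Second, $\tau_2 = \tr{\eta_2}{\dee} + \frac{1}{p^{\dee}} \geq \tr{\eta_2}{\dee} + \tail{\eta_2}{\dee} = \eta_2$ by part (2) of Lemma \ref{Truncation: L}. Third, $|\tau| = |\gg| < |\eeta|$, because $\gg \in \LIP$ forces $\gg \neq \eeta$ (as a maximizer of $|\cdot|$ on $\P$, the point $\eeta$ must saturate some defining inequality $(a_i, b_i) \dotp \s \leq 1$, so $\eeta \notin \LIP$), and $\eeta$ is the unique maximal point of $\P$.

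With these in hand, it remains to verify $(a_i, b_i) \dotp \tau < 1$ for every row of $\EM$. Since $|\tau| = |\gg|$, a direct calculation gives $(a_i, b_i) \dotp (\tau - \gg) = (a_i - b_i)(\gamma_2 - \tau_2)$, and using $\gamma_2 \geq \tau_2$, this difference is $\leq 0$ whenever $b_i \geq a_i$, so $(a_i, b_i) \dotp \tau \leq (a_i, b_i) \dotp \gg < 1$ for such rows. The main obstacle is the rows with $a_i > b_i$, where this direct comparison goes the wrong way; here I would mimic the contradiction used in the proof of Lemma \ref{PrelimSimplifying: L}. Suppose $(a_i, b_i) \dotp \tau \geq 1$ for some such row. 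Chaining with $(a_i, b_i) \dotp \eeta \leq 1$, splitting $(a_i, b_i) = (a_i - b_i, 0) + (b_i, b_i)$, and invoking $|\tau| < |\eeta|$ strictly yields $\eta_1 < \tau_1$; combined again with $|\tau| < |\eeta|$, this forces $\tau_2 < \eta_2$, contradicting the second fact. Hence $(a_i, b_i) \dotp \tau < 1$ for every row, so $\tau \in \LIP$.
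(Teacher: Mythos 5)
Your proof is correct, and the core mechanism is the same as the paper's: for rows with $b_i \geq a_i$ you use $|\tau| = |\gg|$ and $\gamma_2 \geq \tau_2$ to get $(a_i,b_i)\dotp\tau \leq (a_i,b_i)\dotp\gg < 1$ (the paper's cancel-and-compare step), and for rows with $a_i > b_i$ you run the argument from the proof of Lemma~\ref{PrelimSimplifying: L}. The structural difference is that the paper argues by contradiction via a citation of Lemma~\ref{PrelimSimplifying: L}, which requires the unverified assertion that $\ll \in \ulP$ (and hence $\ll \in \P$) --- exactly the kind of membership that is at issue --- whereas you verify $\tau \in \LIP$ directly by checking every defining inequality, which sidesteps that assumption entirely. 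To do so you make explicit the two facts the cited lemma's proof actually relies on, namely $\tau_2 \geq \eta_2$ and $|\tau| = |\gg| < |\eeta|$, the latter justified by observing $\eeta \notin \LIP$ so that $\gg \neq \eeta$. Your treatment is therefore a little more self-contained and patches a small gap in the paper's exposition; it also handles a possible constant row $a_i = b_i$ uniformly, whereas Lemma~\ref{PrelimSimplifying: L} only addresses the cases $a_i > b_i$ and $b_i > a_i$. (For completeness you should also remark that $\tau \succeq \0$, which is immediate since $\delta \geq 0$ and truncations are nonnegative.)
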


\begin{proof}  Set $\ll = (\lambda_1, \lambda_2) : = \tr{\eeta}{d} + \( \delta, \frac{1}{p^d} \)$.  If  $\ll \in \LIP$, one may take $\gg = \ll$.

Next, suppose there exists $\gg$ satisfies the given properties, but that $\ll \notin \LIP$.  As $\ll \in \ulP$, Lemma \ref{PrelimSimplifying: L} implies there exists a row $(a_i, b_i)$ of $E$ such that $b_i > a_i$ and $(a_i, b_i) \dotp \ll \geq 1$.  To simplify notation, we denote this row by $(a,b)$ with $b>a$.  Then 
\begin{equation} \label{grandsimplification2: e} (a,a) \dotp \gg + (0, b-a) \dotp \gg  = (a,b) \dotp \gg < 1 \leq (a,b) \dotp \ll = (a,a) \dotp \ll + (0, b-a) \dotp \ll.\end{equation}

As $| \gg | = | \ll |$, we may cancel the summands $(a,a) \dotp \gg$ and $(a,a) \dotp \ll$ in \eqref{grandsimplification2: e} to obtain $(0, b-a) \dotp \gg < (0, b-a) \dotp \ll$.  As $b > a$, we conclude that $\gamma_2 < \lambda_2 = \tr{\eta_2}{\dee} + \frac{1}{p^{\dee}}$, a contradiction.
\end{proof}

\subsection{Connections with $F$-pure thresholds of binomials}

\begin{Definition}
If $g \in \mathbb{L}[x_1, \cdots, x_m]$ for some field $\mathbb{L}$, we use $\Support(g)$ to denote the unique collection of monomials $\mathscr{N}$ such that $g$ is a $\mathbb{L}^{\ast}$-linear combination of the elements of $\mathscr{N}$.
\end{Definition}

\begin{Lemma}
\label{StandardPolytope: L}
Suppose that $\P$ contains a unique maximal point $\eeta$ with $| \eeta | > 1$.  Then, after possibly exchanging its columns, $\EM$ contains  rows of the form $(1, 0)$ and $(0, n)$ for some $n \geq 1$.  In particular, $\fpt{f} = 1$ for all polynomials with $\Support(f) = \sM$.
\end{Lemma}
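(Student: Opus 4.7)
The plan is first to extract strong structural information about $\EM$ from the hypothesis $|\eeta| > 1$, then to exhibit a power of $f$ witnessing $\fpt{f} = 1$.  The main work is the first part --- a combinatorial reduction combining the nonnegativity of $\eeta$ with the integrality of the row entries of $\EM$; the $F$-pure threshold computation will then follow cleanly from the disjoint-variable structure this forces, together with Lucas' lemma.

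For the structural statement, I would begin by noting that $\eeta \vgeq \0$ (since $\eeta \in \P$), so any row $(a_i, b_i)$ of $\EM$ with both entries at least $1$ would satisfy $a_i \eta_1 + b_i \eta_2 \geq \eta_1 + \eta_2 = |\eeta| > 1$, violating $\EM \eeta \vleq \vone_m$.  Hence every row has at least one zero entry, which means $\x^{\a}$ and $\x^{\b}$ involve disjoint sets of variables and $\P$ reduces to the axis-aligned rectangle $[0, 1/A_*] \times [0, 1/B_*]$, where $A_* = \max_i a_i$ and $B_* = \max_j b_j$.  Both quantities are finite positive integers: the existence of a unique maximum point forces $\P$ bounded, and hence $\a, \b \neq \0$.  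The unique maximal point of this rectangle is then $\eeta = (1/A_*, 1/B_*)$, and $|\eeta| = 1/A_* + 1/B_* > 1$ forces $\min\{A_*, B_*\} = 1$.  After possibly swapping the columns of $\EM$, take $A_* = 1$; then every $a_i \in \{0, 1\}$, so $\a \neq \0$ supplies a row $(1, 0)$, while $\b \neq \0$ supplies a row $(0, n)$ for some $n \geq 1$.

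For the $F$-pure threshold, write $f = c_1 \x^{\a} + c_2 \x^{\b}$ with nonzero coefficients and expand
\[ f^{p^e - 1} = \sum_{k=0}^{p^e - 1} \binom{p^e - 1}{k} c_1^k c_2^{p^e - 1 - k} \x^{k \a + (p^e - 1 - k) \b}. \]
Because $\x^{\a}$ and $\x^{\b}$ have disjoint variable supports, distinct values of $k$ produce monomials of distinct multidegree, so no cancellation occurs; moreover, every binomial coefficient $\binom{p^e - 1}{k}$ is nonzero modulo $p$ by Lemma \ref{Lucas: L}, since each base-$p$ digit of $p^e - 1$ equals $p - 1$.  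The term for $k = p^e - 1$ is $c_1^{p^e - 1} \x^{(p^e - 1) \a}$; since $A_* = 1$, each exponent $(p^e - 1) a_i$ is at most $p^e - 1 < p^e$, so this monomial is not in $\bracket{\m}{e}$.  Therefore $f^{p^e - 1} \notin \bracket{\m}{e}$, and the definition of $\fpt{f}$ gives $\fpt{f} \geq \lim_{e \to \infty} (p^e - 1)/p^e = 1$; combined with $\fpt{f} \leq 1$ from Definition \ref{FPT: D}, this forces $\fpt{f} = 1$.
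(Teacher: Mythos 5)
Your proof is correct, and it takes a genuinely different (and arguably cleaner) route on both halves. For the structural claim, the paper argues via the decomposition $\P = \ulP \cup \lrP \cup \set{\s \in \P : \s \vleq \eeta}$: since $\eta_1, \eta_2 > 0$, the point $\eeta$ is a vertex of each of $\ulP$ and $\lrP$, hence lies on defining hyperplanes $(a_i,b_i)\dotp\s = 1$ with $a_i < b_i$ and $(a_j,b_j)\dotp\s = 1$ with $a_j > b_j$; the inequality $a_i < a_i\abs{\eeta} < 1$ then forces $a_i = 0$, similarly $b_j=0$, giving $\eeta = (1/a_j, 1/b_i)$ and finally $\min\{a_j, b_i\}=1$. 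You sidestep the $\ulP/\lrP$ machinery entirely: the one-line observation that a row with $a_i, b_i \geq 1$ would give $(a_i,b_i)\dotp\eeta \geq \abs{\eeta} > 1$ already shows \emph{every} row of $\EM$ has a zero entry, so $\P$ is literally the rectangle $[0,1/A_\ast]\times[0,1/B_\ast]$ and the rest is immediate. This is more elementary and also yields more information than the paper records — namely that (after swapping columns) \emph{every} $a_i \in \{0,1\}$, i.e.\ $\x^{\a}$ is squarefree — and this extra information is exactly what makes the $F$-pure threshold half airtight. The paper concludes by computing $\max\set{l : f^l \notin (y^{p^e},z^{p^e})} = p^e-1$ and invoking Lemma \ref{FPTTruncation: L} to get $\localfpt{(y,z)}{f}=1$, but since $(y,z) \subseteq \m$, this alone only gives $\fpt{f} \leq \localfpt{(y,z)}{f}$, the direction we already know. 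Your argument supplies the needed lower bound directly: squarefreeness of $\x^{\a}$ forces the $k = p^e-1$ term of $f^{p^e-1}$ to have all exponents below $p^e$, and Lemma \ref{Lucas: L} plus disjointness of supports ensures it survives, so $f^{p^e-1} \notin \bracket{\m}{e}$. In short, your route replaces the vertex-of-two-subpolytopes argument with a simpler row-by-row estimate, and it completes the final step at the full maximal ideal rather than at the subideal $(y,z)$.
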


\begin{proof}
As $P \subseteq [0,1]^2$, the assumption that $\eta_1 + \eta_2 > 1$ implies that $\eta_1$ and $\eta_2$ are non-zero, so that $\ulP$ and $\lrP$ are non-empty.   By Definition \ref{SubPolytopes: D}, $\eeta$ is a vertex of both $\ulP$ and $\lrP$, so  
 \begin{equation} \label{StandardPolytope: e} (a_i, b_i) \dotp \eeta = (a_j, b_j) \dotp \eeta = 1 \end{equation} for some $(a_i, b_i)$ with $a_i < b_i$ and $(a_j,b_j)$ with $b_j > a_j$.  As $a_i < b_i$, it follows that 
\[ a_i < a_i \cdot | \eeta | = ( a_i, a_i) \dotp \eeta < ( a_i, b_i) \dotp \eeta = 1. \] As $a_i \in \mathbb{N}$, it follows that $a_i = 0$.  Similarly, $b_j = 0$. Substituting these values into \rref{StandardPolytope: e} shows that $\eeta = \( \frac{1}{a_j}, \frac{1}{b_i} \)$, and the equality $\frac{1}{a_j} + \frac{1}{b_i} = | \eeta | > 1$ shows that either $a_j$ or $b_i$ must equal $1$.  Thus, after possibly swapping the columns, we see that $(a_j, 0) = (1,0)$ and $(0, b_i) = (0, n)$ for some $n \geq 1$.  Rename the variables so that the variable corresponding to $(1,0)$ is $y$ and the variable corresponding to $(0,n)$ is $z$.  It follows that  $\sM = \set{ y  \mu_1, z^n  \mu_2}$ for monomials $\mu_1$ and $\mu_2$ containing no powers of $y$ or $z$.  If $f = u_1 y  \mu_1 + u_2 z^n  \mu_2$, the linear factor $y$ implies that  $ \max \set{ l : f^l \notin (y^{p^e}, z^{p^e})} = p^e - 1$.  It follows from Lemma \ref{FPTTruncation: L} that $\localfpt{(y,z)}{f} = 1$.
\end{proof}



\section{$F$-pure thresholds of binomials}

We continue to use $\sM = \set{ \x^{\a}, \x^{\b}}$ to denote a collection of distinct monomials in the variables $x_1, \cdots, x_m$ such that every variable appears in either $\x^{\a}$ or $\x^{\b}$.  All polynomials are assumed to be over an $F$-finite field $\L$ of characteristic $p>0$.

\subsection{Statement of the Main Theorem and an Algorithm}

\begin{Theorem}
\label{BinomialMainTheorem: T}
Let $f$ be a polynomial with $\Support(f) = \sM$.  Suppose $\P$ contains a unique maximal point $\eeta$ with $| \eeta | \leq 1$, and let $L := \sup \{ N : \digit{\eta_1}{e} + \digit{\eta_2}{e} \leq p-1 \text{ for }  0 \leq e \leq N \}$. 
\begin{enumerate}
\item If $L = \infty$, then $\fpt{f} = \norm{\eeta}$.
\suspend{enumerate}
If $L < \infty$, set $\dee: = \max \set{ e \leq L : \digit{\eta_1}{e} + \digit{\eta_2}{e} \leq p-2}$. We will see that $1 \leq \dee \leq L$.
\resume{enumerate}
\item If neither $\tr{\eeta}{\dee} + \(\frac{1}{p^{\dee}}, 0\)$ nor $\tr{\eeta}{\dee} + \( 0, \frac{1}{p^{\dee}} \)$ is in $\LIP$, then $\fpt{f} = \tr{ \norm{ \eeta} }{L}$.  \\ 
\item Otherwise, let $\error = \max \set{ \delta : \tr{\eeta}{\dee} + \( \frac{1}{p^{\dee}}, \delta \) \text{or } \tr{\eeta}{\dee} + \( \delta, \frac{1}{p^{\dee}} \) \text{is in $\P$}}$.  Then \vspace{.05in}
\begin{enumerate}
\item $0 < \error \leq \tail{\norm{\eeta}}{L}$, with equality if and only if either $\eta_1$ or $\eta_2$ is in $\Lattice{p^{\dee}}{}$, and \vspace{.05in}
\item $\fpt{f} = \tr{\norm{\eeta}}{L} + \error$.
\end{enumerate}
\end{enumerate}
\end{Theorem}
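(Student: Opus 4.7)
The plan is to reduce $\fpt{f}$ to a combinatorial optimization on $\LIP$ and then solve that optimization by a digit analysis of $\eeta$ in base $p$. By Lemma \ref{FPTTruncation: L}, $p^e \tr{\fpt{f}}{e} = N_e := \max \set{ l : f^l \notin \bracket{\m}{e} }$, so it suffices to understand $N_e$ for all large $e$. Expanding $f^l$ by the multinomial theorem and invoking Lemma \ref{Lucas: L}, the summand indexed by $(k_1,k_2)$ is nonzero modulo $p$ precisely when $k_1$ and $k_2$ add without carrying, and its monomial $\x^{k_1 \a + k_2 \b}$ lies outside $\bracket{\m}{e}$ precisely when $(k_1/p^e, k_2/p^e) \in \LIP$. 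Hence $N_e$ equals the maximum of $k_1 + k_2$ over admissible pairs satisfying both conditions.

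Part (1) is then immediate: the pair $(k_1,k_2) := (p^e \tr{\eta_1}{e}, p^e \tr{\eta_2}{e})$ is admissible, since Lemma \ref{Truncation: L}(1) gives $\tr{\eeta}{e} \vl \eeta$ strictly in each coordinate (placing it in $\LIP$), while the hypothesis $L = \infty$ together with Remark \ref{Carrying: R} supplies the no-carrying condition and simultaneously forces $\tr{| \eeta|}{e} = \tr{\eta_1}{e} + \tr{\eta_2}{e}$. This yields $N_e \geq p^e \tr{| \eeta|}{e}$; the matching bound $N_e \leq p^e | \eeta|$ follows from $\LIP \subseteq \P$ and the maximality of $\eeta$. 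Letting $e \to \infty$ gives $\fpt{f} = | \eeta|$.

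For Parts (2)--(3) I first establish $1 \leq \dee \leq L$: if $\digit{\eta_1}{e} + \digit{\eta_2}{e} \geq p-1$ for every $1 \leq e \leq L$, then the no-carrying constraint up through digit $L$ forces equalities and, combined with the hypothesis that digit $L+1$ carries, one computes $| \eeta| > 1$, contradicting $| \eeta| \leq 1$. The key identity underpinning the remaining analysis is then Lemma \ref{Carrying: L}: $\tr{\eta_1}{L} + \tr{\eta_2}{L} + 1/p^L = \tr{| \eeta|}{L}$. For the lower bounds, I construct admissible pairs of the form $k_1/p^e = \tr{\eta_1}{L}$, $k_2/p^e = \tr{\eta_2}{L} + 1/p^L - 1/p^e$ (or the symmetric variant, modified by $\error$ in Part (3)), whose admissibility reduces via Lemma \ref{GrandSimplifying: L} applied with $d = \dee$ and $\delta$ appropriate to the case to the containment statements on the shifted points $\tr{\eeta}{\dee} + (\delta, 1/p^\dee)$ and $\tr{\eeta}{\dee} + (1/p^\dee, \delta)$. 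The bounds $0 < \error \leq \tail{| \eeta|}{L}$ in (3a) then come from Lemma \ref{Truncation: L}(2) applied to the coordinates of $\eeta$, together with a direct inspection of when the $\P$-defining inequalities meet equality.

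The matching upper bound is the main obstacle and will occupy the heart of the proof. Given an admissible pair $(\lambda_1, \lambda_2) = (k_1/p^e, k_2/p^e) \in \LIP$, I split on whether it lies in $\ulP$ or $\lrP$ (Definition \ref{SubPolytopes: D}) and invoke Lemma \ref{PrelimSimplifying: L} to discard the irrelevant defining halfspaces of $\P$. Within each half, I locate the first digit at which $(\lambda_1, \lambda_2)$ can deviate from $\tr{\eeta}{\dee}$; the no-carrying constraint forces any such deviation to take the shape of a single unit bump in one coordinate at position $\leq \dee$, and Lemma \ref{Carrying: L} combined with the polytope constraint shows the resulting point must factor through one of the shifted points parametrizing $\error$. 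A reverse application of Lemma \ref{GrandSimplifying: L} then caps $\lambda_1 + \lambda_2$ at $\tr{| \eeta|}{L}$ in Part (2), where the hypothesis forbids any extension, and at $\tr{| \eeta|}{L} + \error$ in Part (3). The most delicate step will be simultaneously managing the no-carrying and polytope constraints as the digit position varies, and closing the gap between the specific witnesses I construct and an arbitrary admissible pair; I expect this case analysis, organized by the first position where $(\lambda_1, \lambda_2)$ departs from $(\tr{\eta_1}{L}, \tr{\eta_2}{L})$, to be the principal technical burden.
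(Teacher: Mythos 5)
Your overall framing — reduce to a combinatorial optimization over $\LIP$ via Lemma \ref{FPTTruncation: L}, the multinomial theorem, and Lucas, then push the admissible pair back to the canonical shifted point via Lemma \ref{GrandSimplifying: L} — matches the skeleton of the paper's proof, and your handling of Part (1) and of $1 \le d \le L$ is sound. However, both your lower-bound construction and your upper-bound strategy have genuine gaps.

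For the lower bound, your explicit witness $\bigl(\tr{\eta_1}{L},\, \tr{\eta_2}{L} + \tfrac{1}{p^L} - \tfrac{1}{p^e}\bigr)$ need not lie in $\P$ at all. In the paper's Example \ref{Comp2: E} ($\sM = \{x^7y^2, x^5y^6\}$, $p = 43$, $\eeta = (\tfrac{1}{32}, \tfrac{5}{32})$, $d = L = 1$), taking $e = 2$ gives the pair $(\tfrac{43}{1849}, \tfrac{300}{1849})$, which violates the constraint $2s_1 + 6s_2 \le 1$ since $\tfrac{86 + 1800}{1849} > 1$. You cannot recover admissibility from Lemma \ref{GrandSimplifying: L} either: in Part (2) that lemma applies with $\delta \ge 0$ and the hypothesis is precisely that the shifted point is \emph{not} in $\LIP$, so the lemma would yield the opposite of what you need (and the sum you are targeting corresponds to $\delta < 0$, outside the lemma's scope). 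The paper sidesteps this entirely by citing \cite{Polynomials} for the inequality $\tr{|\eeta|}{L} \le \fpt{f}$ and never exhibits such a pair directly; a from-scratch replacement for that estimate is non-trivial and is not supplied by your sketch. (Indeed, the admissible pair that actually achieves the maximum in that example at level $e = 2$ is $(\tfrac{64}{1849}, \tfrac{279}{1849}) = \tr{\eeta}{1} + (\tfrac{21}{1849}, \tfrac{21}{1849})$, which spreads the excess evenly across both coordinates at a position $> d$.)

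This same example also defeats your upper-bound heuristic that ``the no-carrying constraint forces any deviation to take the shape of a single unit bump in one coordinate at position $\le d$'': the maximizing pair above is not of that form. The paper does not attempt to classify the shape of admissible pairs directly. Instead, its key maneuver for the upper bound — which your proposal omits — is the Frobenius-flatness factorization $f^{p^e\, \tr{|\eeta|}{L} + p^e\tr{\error}{e} + 1} = \bigl(f^{p^d\,\tr{|\eeta|}{L}}\bigr)^{p^{e-d}} \cdot f^{p^e\tr{\error}{e} + 1}$, followed by decomposing the surviving monomial as $\x^{p^d \EM\aalpha + p^e \EM\bbeta}$ via Lemma \ref{reducetosupport: L} applied at the two different scales $d$ and $e$. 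It is the sum $\gg = \aalpha + \bbeta$, not the raw exponent vector, that is fed into Lemma \ref{GrandSimplifying: L}, and the contradiction comes from the fact that $|\bbeta| = \tr{\error}{e} + \tfrac{1}{p^e}$ exceeds $\error$. Without this two-scale decomposition, ``locating the first digit of deviation'' and handling arbitrary admissible pairs directly becomes the hard combinatorial problem you flag at the end of your proposal, and nothing in the sketch resolves it.
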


We now show how Theorem \ref{BinomialMainTheorem: T} may be used to construct an algorithm that will allow us to compute the $F$-pure threshold at $\m$ of \emph{any} binomial over $K$.

\begin{Algorithm}
\label{BinomialAlgorithm}
Let $g$ be a binomial in $\L[x_1, \cdots, x_m]$.   Our goal is to compute $\fpt{g}$. 
\begin{algorithm}
\item Factor $g= \mu \cdot h$ for some momial $\mu$ and binomial $h$ with the property that no variable appearing in $\mu$ appears in $h$, and so that no variable appears with the same exponent in both supporting monomials of $h$.  By Lemma  \ref{UniquePointCriteria: L}, the polytope $\P$ associated to $\Support(h)$ will contain a unique maximal point $\eeta \in \P$.  
\item  Reorder the variables so that $\mu =x_1^{a_1} \cdots x_d^{a_d}$ .  By Lemma \ref{FPTDifferentVariables: L},  \[ \fpt{g} = \fpt{\mu \cdot h} =  \min \set{ \localfpt{ \( x_1, \cdots, x_d \)}{ \mu }, \localfpt{(x_{d+1}, \cdots, x_m)}{h} }.\]  It is an easy exercise to verify that $\localfpt{ (x_1, \cdots, x_d)}{\mu} = \min \set{ \frac{1}{a_1}, \cdots, \frac{1}{a_d}}$.  
\item If $| \eeta | >1$, it follows from Lemma \ref{StandardPolytope: L} that $\fpt{h}=1$.
\item If $| \eeta | \leq 1$, we may compute $\fpt{h}$ using Theorem \ref{BinomialMainTheorem: T}.
\end{algorithm}
\end{Algorithm}

We now present a series of examples which show Theorem \ref{BinomialMainTheorem: T} in action.

\begin{Example}  
\label{Comp1: E}
Let $\sM = \set{ x^7 y^2, x^5 y^6}$, and choose $f$ with $\Support(f) = \sM$.  It follows that \[ \P = \set{ (s_1, s_2) \in \mathbb{R}^2_{\geq 0} : \begin{array}{l}
       2 s_1 + 6 s_2 \leq 1 \\
       7 s_1 + 5 s_2 \leq 1 \\
\end{array}
}
.\]  
Note that $\P$ contains a unique maximal point $\eeta = \(\frac{1}{32}, \frac{5}{32} \)$ (see Figures \ref{BinomialExample p=43: F} and \ref{BinomialExample p=37: F}).  By Theorem \ref{BinomialMainTheorem: T}, we see that $\fpt{f} = | \eeta | = \frac{3}{16}$ if and only if $\frac{1}{32}$ and $\frac{5}{32}$ add without carrying (in base $p$).  By Example \ref{ConstantExpansionCarrying: E}, if $p \equiv 1 \bmod 32$, (i.e., if $p=97, 193, 257, 353$ or $449$) then $\frac{1}{32}$ and $\frac{5}{32}$ add without carrying.  Note that there are infinitely many such primes by Dirichlet's theorem.  However, there exist $p$ such that $\fpt{f} = \frac{3}{16}$ with $p \not \equiv 1 \bmod 32$.  For example, if $p = 47$, then $\frac{1}{32} = . \overline{ \ 1 \ 22} \ (\base 47) \text{ and } \frac{5}{32} = . \overline{ \ 7 \ 16} \ (\base 47)$.
\end{Example}

\begin{Example}  
\label{Comp2: E}
We now compute $\fpt{f}$ when $p=43$.  As \begin{equation} \label{BinomialComputation1: e} \frac{1}{32} = . \overline{ \ 1 \ 14 \ 33 \ 25 \ 22 \ 36 \ 12 \ 4 } \ (\base 43) \text{ and } \frac{5}{32} = . \overline{ \ 6 \ 30 \ 38 \ 41 \ 28 \ 9 \ 17 \ 20 } \ (\base 43) ,\end{equation} we see that carrying occurs with the second digits, and that $\dee = L = 1$.  By \rref{BinomialComputation1: e}, we see that $\tr{\eeta}{1} = \( \frac{1}{43}, \frac{6}{43} \)$, and Figure \ref{BinomialExample p=43: F} shows that neither $\tr{\eeta}{1} + \( \frac{1}{43}, 0 \)$ nor $\tr{\eeta}{1} + \( 0, \frac{1}{43} \)$ is contained in $\LIP$.   Theorem \ref{BinomialMainTheorem: T} allows us to conclude that $\fpt{f} = \tr{ | \eeta |}{1} = \tr{ \frac{3}{16} }{1} = \frac{8}{43}$.
\begin{figure}[h]
\begin{center}
\begin{minipage}[h]{3in}
\psset{xunit=800pt, yunit=800pt}
\begin{pspicture}(0,-.01)(.18,.2)
\psaxes[ticks=none]{->}(0,0)(0,0)(.17,.19)[$s_1$,270][$s_2$,90]
\psline(0,0)(0,.1666)(.03125, .15625)(.14285,0)(0,0)


\uput[45](.03125, .15625){$\eeta$}

\psdots[dotsize=6pt](0,.1666)(.03125, .15625)(.14285,0)
\psdot[dotsize=6pt](.023255,.13953)

\uput[-110](.023255,.13953){$\tr{\eeta}{1}$} 

\psline[linestyle=dashed](-.01,.115)(.06,.115)(.06,.18)(-.01,.18)(-.01,.115)

\psline[arrowsize=6pt]{<->}(.065,.15)(.2,.115)

\end{pspicture}
\end{minipage}
\begin{minipage}[h]{2.2in}
\psset{xunit=2200pt, yunit=2200pt}
\begin{pspicture}(0,-.01)(.06,.065)

\psline(0,0)(0,.0516)(.03125, .04125)(.060714,0)


\psdots[dotsize=6pt](0,.0516)(.03125, .04125)

\uput[45](.03125, .04125){$\eeta = \( \frac{1}{32}, \frac{5}{32} \) $}


\psdot[dotsize=6pt](.023255,.02453)
\uput[-110](.023255,.02453){$\tr{\eeta}{1}$}

\psset{linestyle=dashed, dotsize=6pt}
\psline(.023255,.02453)(.023255, .0477858)
\psline(.023255,.02453)(.0465108, .02453)
\psdots(.023255, .0477858)(.0465108, .02453)

\uput[270](.0348829, .02453){$\frac{1}{43}$}

\uput[180](.023255, .0361579){$\frac{1}{43}$}

\psline[linestyle=dashed](-.01,.0)(.061,.0)(.061,.065)(-.01,.065)(-.01,.0)

\psline[linestyle=solid](0,0)(0,.065)
\end{pspicture}
\end{minipage}

\caption{$p=43$, $\dee = L = 1$, and $\fpt{f} = \tr{\frac{3}{16}}{1} = .8 \ (\base 43)$. }
\label{BinomialExample p=43: F}
\end{center}
\end{figure}
\end{Example}

\begin{Example}
\label{Comp3: E}
We end by computing $\fpt{f}$ when $p=37$.  As \begin{equation} \label{BinomialComputation2: e}  \frac{1}{32} = .\overline{ \ 1 \ 5 \ 28 \ 33 \ 19 \ 24 \ 10 \ 15 } \ (\base 37) \text{ and } \frac{5}{32} = .\overline{ \ 5 \ 28 \ 33 \ 19 \ 24 \ 10 \ 15 \ 1 } \ (\base 37), \end{equation}
we see that the first carry occurs with the third digits, and that $\dee = L = 2$.  We also see from \rref{BinomialComputation2: e} that $\tr{\eeta}{2} = \( \frac{1}{37} + \frac{5}{{37}^2}, \frac{5}{37} + \frac{28}{37^2} \)$.  From Figure \ref{BinomialExample p=37: F}, we see that both $\tr{\eeta}{2} + \( \frac{1}{37^2},0 \)$ and $\tr{\eeta}{2} + \( 0, \frac{1}{37^2} \)$ are contained in $\LIP$.

From Figure \ref{BinomialExample p=37: F}, we also see that $\error = \max \set{ \delta : \tr{\eeta}{2} + \( \frac{1}{{37}^2}, \delta \)  \in \P }$.  Morever, Figure \ref{BinomialExample p=37: F} shows that the point $\tr{\eeta}{2} + \( \frac{1}{37^2}, \error \)$ likes on the hyperlane $7 s_1 + 5 s_2 = 1$, and an easy calculation shows that $\error = \frac{3}{6845} = . 0 \ 0 \ \overline{22 \ 7 \ 14 \ 29 } \ (\base 37)$.  Thus, Theorem \ref{BinomialMainTheorem: T} shows that $\fpt{f} = \tr{ | \eeta | }{2} + \error =  \tr{\frac{3}{16}}{2} + \error = .6 \ 34 \ \overline{22 \ 7 \ 14 \ 29} \ (\base 37)$.


\begin{figure}
\begin{center}
\begin{minipage}[h]{3in}
\psset{xunit=800pt, yunit=800pt}
\begin{pspicture}(0,-.01)(.18,.2)
\psaxes[ticks=none]{->}(0,0)(0,0)(.17,.19)[$s_1$,270][$s_2$,90]


\psline(0,0)(0,.1666)(.03125, .15625)(.14285,0)(0,0)


\psline[linestyle=solid, arrowsize=6pt]{->}(.05,.18)(.03125, .15625)
\uput[45](.05,.18){$\eeta$}

\psline[linestyle=solid, arrowsize=6pt]{->}(.02,.135)(.03067, .1555)
\uput[260](.02,.135){$\tr{\eeta}{2}$}
\psdot[dotsize=6pt](.03067, .1555)

\psdots[dotsize=6pt](0,.1666)(.03125, .15625)(.14285,0)


\psline[linestyle=dashed](.022,.15)(.04,.15)(.04,.163)(.022,.163)(.022,.15)

\psline[arrowsize=6pt]{<->}(.045,.155)(.2,.115)

\end{pspicture}
\end{minipage}
\begin{minipage}[h]{2.2in}
\psset{xunit=10pt, yunit=10pt}
\begin{pspicture}(0,0)(8,9)


\uput[270](0,0){$\tr{\eeta}{2}$}
\psdot[dotsize=6pt](0,0)

\uput[45](5.7, 6.6){$\eeta = \( \frac{1}{32}, \frac{5}{32} \) $}
\psdot[dotsize=6pt](5.7, 6.6)

\psline[linestyle=dashed](-3,-3)(15,-3)(15,12)(-3,12)(-3,-3)

\psline(5.7,6.6)(-3,9.5)
\psline(5.7, 6.6)(12.557,-3)

\psset{linestyle=dashed}
\psline(0,0)(7.3,0)
\uput[270](3.65,0){$\frac{1}{{37}^{2}}$}
\psline(0,0)(0,7.3)
\uput[180](0,3.65){$\frac{1}{{37}^{2}}$}
\psdots[dotsize=6pt](0,7.3)(7.3,0)

\psline(7.3,0)(7.3,4.36) 
\uput[180](7.3,2.18){$\error$}
\psline(0,7.3)(3.6,7.3) 
\end{pspicture}
\end{minipage}

\caption{$p=37, \dee = L = 2$, and $\fpt{f} = \tr{\frac{3}{16}}{2} + \error = .6 \ 34 \ \overline{22 \ 7 \ 14 \ 29} \ (\base 37) $} 
\label{BinomialExample p=37: F}
\end{center}
\end{figure}

\end{Example}


\subsection{Proof of Theorem \ref{BinomialMainTheorem: T}}
The rest of this section is dedicated to the proof of Theorem \ref{BinomialMainTheorem: T}.  We will rely heavily on Lemma \ref{GrandSimplifying: L}, Lemma \ref{reducetosupport: L}, as well as on estimates for $F$-pure thresholds given in \cite[\MTP]{Polynomials}.  We  break up the proof into three parts.  We will continue to use $f$ to denote a polynomial with $\Support(f) = \sM$, and we write $f = u_1 \x^{\a} + u_2 \x^{\b}$.

\begin{Lemma}
\label{reducetosupport: L}
If $\alpha \in \frac{1}{p^e} \cdot \mathbb{N}$, then $f^{p^e \alpha} \equiv \sum \binom{ p^e \alpha }{\gg} \u^{p^e \gg} \x^{p^e \EM \gg} \bmod \bracket{\m}{e}$, where we sum over all $\gg \in \frac{1}{p^e} \cdot \mathbb{N}^2 \cap \LIP$ such that $| \gg | = \alpha$.  In particular, $f^{p^e \alpha} \notin \bracket{\m}{e}$ if and only if there exists $\gg \in \frac{1}{p^e} \cdot \mathbb{N} \cap \LIP$  with $| \gg | = \alpha$  and $\binom{ p^e \alpha}{\gg} \neq 0 \bmod p$.
\end{Lemma}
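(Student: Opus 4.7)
The plan is to expand $f^{p^e \alpha}$ using the binomial theorem, reindex in terms of exponent vectors scaled by $\frac{1}{p^e}$, and then recognize $\LIP$ as precisely the set of reindexed exponents whose corresponding monomial survives modulo $\bracket{\m}{e}$. The main subtlety is confirming that the surviving monomials are $\L$-linearly independent, so that no cancellation masks a nonzero class.

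First I would write $f = u_1 \x^{\a} + u_2 \x^{\b}$ and apply the binomial theorem. Since $p^e \alpha \in \mathbb{N}$ by hypothesis,
\[ f^{p^e \alpha} = \sum_{k_1 + k_2 = p^e \alpha} \binom{p^e \alpha}{k_1, k_2} u_1^{k_1} u_2^{k_2} \x^{k_1 \a + k_2 \b}. \]
Setting $\gg = (k_1, k_2)/p^e \in \frac{1}{p^e} \cdot \mathbb{N}^2$, the condition $k_1 + k_2 = p^e \alpha$ becomes $|\gg| = \alpha$, and since the two columns of $\EM$ are $\a$ and $\b$, we have $p^e \EM \gg = k_1 \a + k_2 \b$. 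The expansion becomes
\[ f^{p^e \alpha} = \sum_{\gg \in \frac{1}{p^e} \cdot \mathbb{N}^2,\ |\gg| = \alpha} \binom{p^e \alpha}{p^e \gg} \u^{p^e \gg} \x^{p^e \EM \gg}. \]

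Next I would identify which terms survive modulo $\bracket{\m}{e} = (x_1^{p^e}, \ldots, x_m^{p^e})$. A monomial $\x^{p^e \EM \gg}$ lies in $\bracket{\m}{e}$ if and only if some component of $p^e \EM \gg$ is at least $p^e$, equivalently $\EM \gg \not\vl \vone_m$. Because the constraint $\gg \vgeq \0$ is automatic from $\gg \in \frac{1}{p^e} \cdot \mathbb{N}^2$, the surviving $\gg$ are precisely those lying in $\LIP$. This yields the claimed congruence.

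For the ``in particular'' part, the hard step is to rule out cancellations among the surviving terms. I would observe that if $\gg, \gg'$ satisfy $|\gg| = |\gg'| = \alpha$ and $\EM \gg = \EM \gg'$, then $(\gamma_1 - \gamma_1') \a + (\gamma_2 - \gamma_2') \b = \0$ together with $(\gamma_1 - \gamma_1') + (\gamma_2 - \gamma_2') = 0$ gives $(\gamma_1 - \gamma_1')(\a - \b) = \0$, forcing $\gg = \gg'$ since $\a \neq \b$ (as $\x^{\a}$ and $\x^{\b}$ are distinct monomials). Thus distinct $\gg$ contribute distinct monomials, which are $\L$-linearly independent in $\L[x_1, \ldots, x_m]/\bracket{\m}{e}$. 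Since $u_1, u_2 \in \L^\times$, the factor $\u^{p^e \gg}$ is a unit, so $f^{p^e \alpha} \notin \bracket{\m}{e}$ if and only if at least one multinomial coefficient $\binom{p^e \alpha}{p^e \gg}$ with $\gg \in \LIP \cap \frac{1}{p^e} \cdot \mathbb{N}^2$ and $|\gg| = \alpha$ is nonzero modulo $p$, as asserted.
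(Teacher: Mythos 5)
Your proof is correct and takes essentially the same approach as the paper: binomial-theorem expansion, reindexing by $\gg = \k/p^e$, and identifying $\LIP$ as the survivors mod $\bracket{\m}{e}$. The one difference is that you explicitly justify the "no gathering of terms" step (showing $|\gg|=|\gg'|$ and $\EM\gg=\EM\gg'$ force $\gg=\gg'$ via $(\gamma_1-\gamma_1')(\a-\b)=\0$), whereas the paper merely asserts this as easy to see.
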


\begin{proof}  We know that $f^{p^e \alpha} = \sum_{| \k | = p^e \alpha} \binom{p^e \alpha}{\k} \u^{\k} \x^{\EM \k}$.  If $\x^{\EM \k} \notin \bracket{\m}{e}$, then $\EM \k \vl p^e \cdot \vone_m$, so that $\gg = \frac{1}{p^e} \cdot \k \in \LIP$ and $| \gg | = \alpha$.  As $f$ is a binomial, and it is easy to see that there is no gathering of terms when raising $f$ to powers, and thus $f^{p^e \alpha} \notin \bracket{\m}{e}$ if and only if one of the summands given by the binomial theorem is not in $\bracket{\m}{e}$.
\end{proof}

\begin{Remark}
In proving Theorem \ref{BinomialMainTheorem: T}, we will use Lemma \ref{reducetosupport: L} to compute $\max \set{ l : f^l \notin \bracket{\m}{e}}$, and hence $\fpt{f}$, in terms of the geometry of $\P$.  This explains why the statements in Theorem \ref{BinomialMainTheorem: T} do not depend on the coefficients of $f$.
\end{Remark}

\begin{TheoremPt1}
If $L = \infty$, then $\fpt{f} = | \eeta |$.
\end{TheoremPt1}
\begin{proof}
This follows from \cite[\MTP]{Polynomials}.
\end{proof}

Suppose that $L < \infty$, so that $\tr{\eta_1}{L} + \tr{\eta_2}{L} + \frac{1}{p^L} = \tr{\eta_1 + \eta_2}{L}$ by Lemma \ref{Carrying: L}.  This fact is crucial to many of the arguments that follow, and we will apply it without further mention.  In fact, the same statement holds after replacing $L$ by $d$, as defined in Theorem \ref{BinomialMainTheorem: T}.  

\begin{Lemma} 
\label{LastLemma: L}
Suppose that $L < \infty$, and let  $\dee: = \max \set{ e \leq L : \digit{\eta_1}{e} + \digit{\eta_2}{e} \leq p-2}$.  Then $1 \leq d \leq L$.  Furthermore, $\tr{\eta_1}{\dee} + \tr{\eta_2}{\dee} + \frac{1}{p^{\dee}} = \tr{\eta_1 + \eta_2}{L}$.  In particular, $\tr{\eta_1 + \eta_2}{L} \in \frac{1}{p^d} \cdot \mathbb{N}$.
\end{Lemma}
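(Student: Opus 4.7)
The plan is to apply Lemma~\ref{Carrying: L} twice: once, via a short contradiction, to force $d \geq 1$, and once to telescope the ``trivial'' digits in positions $d+1, \ldots, L$ to produce the displayed identity.

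First I would handle $d \geq 1$. Since $L < \infty$, the digits at position $L+1$ carry, while by definition of $L$ no carrying occurs at any earlier position, so $\digit{\eta_1}{e} + \digit{\eta_2}{e} \leq p-1$ for every $1 \leq e \leq L$. Suppose toward contradiction that no such $e$ satisfies $\digit{\eta_1}{e} + \digit{\eta_2}{e} \leq p-2$; then each of these digit-sums would equal exactly $p-1$, giving
\[
\tr{\eta_1}{L} + \tr{\eta_2}{L} \;=\; \sum_{e=1}^L \frac{p-1}{p^e} \;=\; 1 - \frac{1}{p^L}.
\]
Because $|\eeta| \leq 1$ and carrying occurs at level $L+1$, Lemma~\ref{Carrying: L} would then force $\tr{\eta_1 + \eta_2}{L} = 1$. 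This contradicts the elementary bound $\tr{\gamma}{L} \leq 1 - 1/p^L$ valid for any $\gamma \in [0,1]$ (each base-$p$ digit is at most $p-1$). Hence some $e \leq L$ achieves $\digit{\eta_1}{e} + \digit{\eta_2}{e} \leq p-2$, so $d \geq 1$; and $d \leq L$ is immediate from the definition of $d$ as a maximum over $\{1,\ldots,L\}$.

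Next I would derive the displayed identity. For each $e$ with $d < e \leq L$, combining ``no carrying'' with the maximality of $d$ forces $\digit{\eta_1}{e} + \digit{\eta_2}{e} = p-1$ (it is both $\leq p-1$ and $\geq p-1$). Telescoping,
\[
\tr{\eta_1}{L} + \tr{\eta_2}{L} \;=\; \tr{\eta_1}{d} + \tr{\eta_2}{d} + \sum_{e=d+1}^L \frac{p-1}{p^e} \;=\; \tr{\eta_1}{d} + \tr{\eta_2}{d} + \frac{1}{p^d} - \frac{1}{p^L}.
\]
A second application of Lemma~\ref{Carrying: L} (again using $|\eeta|\leq 1$ and the carry at level $L+1$) gives $\tr{\eta_1+\eta_2}{L} = \tr{\eta_1}{L} + \tr{\eta_2}{L} + 1/p^L$, and substituting the telescoped expression produces $\tr{\eta_1}{d} + \tr{\eta_2}{d} + 1/p^d = \tr{\eta_1+\eta_2}{L}$. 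The ``in particular'' clause is immediate, since each $\tr{\eta_i}{d} \in \tfrac{1}{p^d}\mathbb{N}$ by Lemma~\ref{Truncation: L}(1).

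The only subtle step is the $d \geq 1$ contradiction: the key realization is that saturating every digit-sum at $p-1$ would, via Lemma~\ref{Carrying: L}, push the truncation $\tr{\eta_1+\eta_2}{L}$ all the way up to $1$, which no $L$-th truncation can attain. Once one sees that Lemma~\ref{Carrying: L} applies at both ends of the argument, the remaining work is a one-line telescoping sum.
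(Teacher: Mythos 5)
Your proof is correct and follows essentially the same route as the paper: show $d \geq 1$ by contradiction (if $d = 0$, all digit sums in positions $1,\ldots,L$ are saturated at $p-1$, contradicting $|\eeta|\leq 1$), then telescope the saturated digits from position $d+1$ to $L$ and finish with Lemma~\ref{Carrying: L}. The only cosmetic difference is in the $d\geq 1$ step: the paper derives the contradiction directly by estimating $\eta_1+\eta_2 > \tr{\eta_1}{L+1}+\tr{\eta_2}{L+1} \geq 1$, whereas you route it through Lemma~\ref{Carrying: L} to force $\tr{\eta_1+\eta_2}{L}=1$ and then compare against the bound $\tr{\gamma}{L}\leq 1-1/p^L$ --- same underlying arithmetic, slightly different packaging, and your version also absorbs the paper's separate check that $L\geq 1$ into the same contradiction.
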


\begin{proof}

If $L=0$, then $\digit{\eta_1}{1} + \digit{\eta_2}{1} \geq p$, which is impossible as $| \eeta | \leq 1$.  Thus, $L \geq 1$.   If $\dee = 0$, then $\digit{\eta_1}{e} + \digit{\eta_2}{e} = p-1$ for $1 \leq e \leq L$, and it follows that $\tr{\eta_1}{L} + \tr{\eta_2}{L} = \sum_{e=1}^L \frac{p-1}{p} = \frac{p^L-1}{p^L}$.  By definition, $\digit{\eta_1}{L+1} + \digit{\eta_2}{L+1} \geq p$, and so \[ \eta_1 + \eta_2 > \tr{\eta_1}{L+1} + \tr{\eta_2}{L+1} = \tr{\eta_1}{L} + \tr{\eta_2}{L} + \frac{ \digit{\eta_1}{L+1} + \digit{\eta_2}{L+1}}{p^{L+1}} \geq \frac{p^L-1}{p^L} + \frac{1}{p^L} = 1,\] which again contradicts the fact that $| \eeta | \leq 1$.  We conclude that $\dee \geq 1$.

For the remainder, we assume $\dee < L$.  Then $\tr{\eta_1}{L} + \tr{\eta_2}{L} = \tr{\eta_1}{\dee} + \tr{\eta_2}{\dee} + \sum_{e={\dee + 1}}^L \frac{p-1}{p^e}$, so $\tr{\eta_1 + \eta_2}{L} = \tr{\eta_1}{L} + \tr{\eta_2}{L} + \frac{1}{p^L} = \tr{\eta_1}{\dee} + \tr{\eta_2}{\dee} + \sum_{e = \dee + 1}^L \frac{p-1}{p^e} + \frac{1}{p^L} = \tr{\eta_1}{\dee} + \tr{\eta_2}{\dee} + \frac{1}{p^{\dee}}$.
\end{proof}

We now continue with our proof of Theorem \ref{BinomialMainTheorem: T}.

\begin{TheoremPt2}
Suppose $L < \infty$.   Then $1 \leq d \leq L$.  Furthermore, if neither $\tr{\eeta}{d} + (\frac{1}{p^d}, 0)$ nor $\tr{\eeta}{d} + (0, \frac{1}{p^d})$ is in $\LIP$, then $\fpt{f} = \tr{\eta_1 + \eta_2}{L}$. 
\end{TheoremPt2}

\begin{proof}
We saw $1 \leq d \leq L$ in Lemma \ref{LastLemma: L}, and it follows from \cite[\MTP]{Polynomials} that 
\begin{equation} \label{pt2one: e} 
\tr{\eta_1 + \eta_2}{L} = \tr{\eta_1}{L} + \tr{\eta_2}{L} + \frac{1}{p^L}   \leq \fpt{f}. \end{equation}  We claim the inequality in \eqref{pt2one: e} is strict.  By means of contradiction, suppose it's not.  Applying Lemma \ref{Truncation: L} shows that \begin{equation} \label{pt2two: e} \tr{\eta_1 + \eta_2}{L} \leq \tr{\fpt{f}}{L} = \max \set{ l : f^l \notin \bracket{\m}{L}},
\end{equation} where we have used Lemma \ref{FPTTruncation: L} to obtain the  equality in \eqref{pt2two: e}.  Apparently, \eqref{pt2two: e} shows that $f^{p^L \tr{\eta_1 + \eta_2}{L}} \notin \bracket{\m}{L}$.  However, Lemma \ref{LastLemma: L} allows us to rewrite this as \[ f^{p^L \tr{\eta_1 + \eta_2}{L}} = \( f^{p^d \tr{\eta_1 + \eta_2}{L}} \)^{p^{L-d}}  \notin \bracket{\m}{L},\] and the flatness of Frobenius then implies that \begin{equation} \label{pt2three: e} f^{p^d \tr{\eta_1 + \eta_2}{L}} \notin \bracket{\m}{d}. \end{equation}

Applying Lemma \ref{reducetosupport: L} to \eqref{pt2three: e} shows there exists an element 
\begin{equation} \label{pt2four: e} \gg = (\gamma_1, \gamma_2) \in \frac{1}{p^d} \cdot \mathbb{N} \cap \LIP \text{ with } | \gg | = \tr{\eta_1 + \eta_2}{L}  = \tr{\eta_1}{d} + \tr{\eta_2}{d} + \frac{1}{p^d},\end{equation}
where we have used Lemma \ref{LastLemma: L} to obtain the last equality in \eqref{pt2four: e}.  We see that it is impossible for both $\gamma_1 \leq \tr{\eta_1}{d}$ and $\gamma_2 \leq \tr{\eta_2}{d}$.  Without loss of generality, we will assume that $\gamma_2 > \tr{\eta_2}{d}$, and as $\gamma_2 \in \frac{1}{p^d} \cdot \mathbb{N}$, we may even assume that $\gamma_2 \geq \tr{\eta_2}{d} + \frac{1}{p^d}$.  Finally, combining this inequality with the properties of $\gg$ recorded in \eqref{pt2four: e}, we see that $\gg$ satisfies the conditions of Lemma \ref{GrandSimplifying: L} (with $\delta = 0$), which implies that $\tr{\eeta}{d} + \( 0, \frac{1}{p^d} \) \in \LIP$, a contradiction.  We conclude that equality holds in \eqref{pt2one: e}, i.e. $\fpt{f}  = \tr{\eta_1 + \eta_2}{L}$.
\end{proof}

\begin{Lemma}
\label{ReallyLast: L}
Suppose that either  $\tr{\eeta}{d} + (\frac{1}{p^d}, 0)$ or $\tr{\eeta}{d} + (0, \frac{1}{p^d})$ is in $\LIP$ and set \[ \error = \max \set{ \delta : \tr{\eeta}{\dee} + \( \frac{1}{p^{\dee}}, \delta \) \text{or } \tr{\eeta}{\dee} + \( \delta, \frac{1}{p^{\dee}} \) \text{is in $\P$}}.\]
Then, $0 < \error \leq \tail{\eta_1 + \eta_2}{L}$, with equality if and only if either $\eta_1$ or $\eta_2$ is in $\frac{1}{p^d} \cdot \mathbb{N}$.   
\end{Lemma}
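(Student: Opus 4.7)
The plan is to verify the three claims in order, exploiting only the maximality of $\eeta$ and the identity $\tr{\eta_1}{d} + \tr{\eta_2}{d} + \tfrac{1}{p^d} = \tr{\eta_1 + \eta_2}{L}$ furnished by Lemma \ref{LastLemma: L}.

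First, for the positivity $\error > 0$, I would use openness.  The hypothesis places one of the two candidate points, say $\tr{\eeta}{d} + (\tfrac{1}{p^d}, 0)$, in $\LIP$, meaning it strictly satisfies each defining inequality $(a_i, b_i) \dotp \s < 1$.  Increasing the second coordinate by a sufficiently small $\delta > 0$ preserves every such strict inequality and keeps the point in the non-negative orthant, so $\tr{\eeta}{d} + (\tfrac{1}{p^d}, \delta)$ remains in $\P$.  This is the only place the ``$\LIP$'' hypothesis is truly needed rather than just ``$\P$''.

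Next, for the upper bound, my plan is to compare norms.  Say the max defining $\error$ is realized at $\tr{\eeta}{d} + (\tfrac{1}{p^d}, \error) \in \P$; the other case is symmetric.  Since $\eeta$ is the maximal point of $\P$,
\[ \tr{\eta_1}{d} + \tfrac{1}{p^d} + \tr{\eta_2}{d} + \error \leq \eta_1 + \eta_2, \]
and invoking Lemma \ref{LastLemma: L} to collapse the first three summands to $\tr{\eta_1 + \eta_2}{L}$ yields exactly $\error \leq \tail{\eta_1 + \eta_2}{L}$.

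Finally, the equality case hinges on the \emph{uniqueness} of the maximal point.  If $\error = \tail{\eta_1 + \eta_2}{L}$, then the above inequality is an equality, so the realizing point has norm equal to $|\eeta|$; by uniqueness of the maximum it must in fact equal $\eeta$.  Matching first coordinates yields $\tail{\eta_1}{d} = \tfrac{1}{p^d}$, and Lemma \ref{Truncation: L}(2) then forces $\eta_1 \in \tfrac{1}{p^d} \cdot \mathbb{N}$ (the symmetric realizing form instead forces $\eta_2 \in \tfrac{1}{p^d} \cdot \mathbb{N}$).  Conversely, if (say) $\eta_1 = \tr{\eta_1}{d} + \tfrac{1}{p^d}$, a direct calculation using Lemma \ref{LastLemma: L} shows $\tr{\eeta}{d} + (\tfrac{1}{p^d}, \tail{\eta_1 + \eta_2}{L}) = \eeta$, which lies in $\P$, so $\error$ attains the upper bound.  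I expect no serious obstacle here; the only thing to watch is the symmetric bookkeeping between the two candidate points.
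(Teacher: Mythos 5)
Your proposal is correct and follows essentially the same route as the paper: maximality of $\eeta$ for the upper bound, the identity from Lemma \ref{LastLemma: L} to rewrite the coordinate sum, and uniqueness of the maximal point plus Lemma \ref{Truncation: L}(2) for the equality case. The only difference is cosmetic --- you spell out the positivity of $\error$ via the strict inequalities defining $\LIP$, whereas the paper simply asserts it is obvious, and you use the symmetric form $\tr{\eeta}{\dee} + \(\tfrac{1}{p^\dee}, \error\)$ where the paper uses $\tr{\eeta}{\dee} + \(\error, \tfrac{1}{p^\dee}\)$.
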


\begin{proof}
It is obvious that $\error > 0$.  Without loss of generality, we assume that $\tr{\eeta}{d} + ( \error, \frac{1}{p^d}) \in \P$.  By Lemma \ref{LastLemma: L}, this element has coordinate sum $\tr{\eta_1 + \eta_2}{L} + \error$, which must be bounded above by $| \eeta | = \tr{\eta_1 + \eta_2}{L} + \tail{\eta_1 + \eta_2}{d}$, as $\eeta$ is maximal.  We conclude that $\error \leq \tail{\eta_1 + \eta_2}{L}$.  In addition, if we set  $\aalpha:= \tr{\eeta}{d} + \(\tail{\eta_1 + \eta_2}{L}, \frac{1}{p^d} \)$, it follows that 
\begin{equation} \label{LL1: e} \error = \tail{\eta_1 + \eta_2}{L} \iff \aalpha \in \P. \end{equation}
We now wish to derive a more useful expression for $\aalpha$.  By Lemma \ref{LastLemma: L}, 
\begin{align*}
\tail{\eta_1 +\eta_2}{L}  =  \eta_1 + \eta_2 - \tr{\eta_1 + \eta_2}{L}  & =  \eta_1 + \eta_2 - \( \tr{\eta_1}{d} + \tr{\eta_2}{d} + \frac{1}{p^d} \)  \\
 & = \tail{\eta_1}{d} + \tail{\eta_2}{d} - \frac{1}{p^d}.
\end{align*}
Substituting this into \eqref{LL1: e}, we see that \begin{equation} \label{LL2: e} \aalpha = \tr{\eeta}{d} + \( \tail{\eta_1}{d} + \tail{\eta_2}{d} - \frac{1}{p^d}, \frac{1}{p^d} \) = \( \eta_1 + \tail{\eta_2}{d} - \frac{1}{p^d}, \tr{\eta_2}{d} + \frac{1}{p^d} \). \end{equation}  

It follows from \eqref{LL2: e} that $| \aalpha | = | \eeta |$.   As $\eeta$ is the unique maximal point of $\P$, we conclude that $\aalpha \in \P$ if and only if $\aalpha = \eeta$, which by \eqref{LL2: e} happens if and only if $\tail{\eta_2}{d} = \frac{1}{p^d}$ and $\eta_2 = \tr{\eta_2}{d} + \frac{1}{p^d}$, which is clearly redundant and equivalent to the single condition that $\tail{\eta_2}{d} = \frac{1}{p^d}$.  In summary, we have just demonstrated that
\begin{equation} \label{LL3: e} \aalpha \in \P \iff \aalpha = \eeta \iff \tail{\eta_2}{d} = \frac{1}{p^d} \iff \eta_2 \in \frac{1}{p^d} \cdot \mathbb{N},\end{equation} where the last equivalence follows from Lemma \ref{Truncation: L}.  Comparing \eqref{LL3: e} with \eqref{LL1: e}, we see that $\error = \tail{\eta_1 + \eta_2}{L}$ if and only if $\eta_2 \in \frac{1}{p^d}$.
\end{proof}

\begin{TheoremPt3}
Suppose either  $\tr{\eeta}{d} + (\frac{1}{p^d}, 0)$ or $\tr{\eeta}{d} + (0, \frac{1}{p^d})$ is in $\LIP$. 
\begin{enumerate}
\item Then $0 < \error \leq \tail{\norm{\eeta}}{L}$, with equality if and only if either $\eta_1$ or $\eta_2$ is in $\Lattice{p^{\dee}}{}$. \vspace{.05in}
\item Furthermore, we have that $ \frac{1}{p^e} \cdot \max \set{ l : f^l \in \bracket{\m}{e} } = \tr{\eta_1 + \eta_2}{L} + \tr{\error}{e}$ for every $e \geq L$.    In particular, $\fpt{f} = \tr{\eta_1 + \eta_2}{L} + \error$. 
\end{enumerate}
\end{TheoremPt3}

\begin{proof}
The first point is the content of Lemma \ref{ReallyLast: L}.  We assume once and for all that \begin{equation} \label{pt3one: e} \tr{\eeta}{d} + \(0, \frac{1}{p^d} \) \in \LIP \text{ and } \ll:= \tr{\eeta}{d} + \( \error, \frac{1}{p^d} \) \in \P. \end{equation}

By definition of $d$, $\digit{\eta_1}{e} + \digit{\eta_2}{e} \leq 1$ for $1 \leq e < d$ while $\digit{\eta_1}{d} + \digit{\eta_2}{d} + 1 \leq p-1$, and the bound $\error \leq \tail{\eta_1 + \eta_2}{L}$ implies that $\digit{\error}{e} = 0$ for all $1 \leq e \leq L$.  Fix $e \geq L$, and set \[ \ll^e:= \tr{\eeta}{d} + \( \tr{\error}{e}, \frac{1}{p^d} \).\]

The preceding arguments show that the entries of $p^e \cdot \ll^e$ add without carrying (in base $p$), so that $\binom{ p^e | \ll^e | }{p^e \ll^e} \neq 0 \bmod p$.  As $\tr{\error}{e} < \error$, it follows from \eqref{pt3one: e} that $\ll^e \in \LIP$.  By Lemma \ref{LastLemma: L}, $| \ll^e | = \tr{\eta_1 +\eta_2}{L} + \tr{\error}{e}$, and applying Lemma \ref{reducetosupport: L} with $\gg = \ll^e$ shows that \begin{equation} \label{pt3two: e} f^{p^e \tr{\eta_1 + \eta_2}{L}  +p^e \tr{\error}{e}} \notin \bracket{\m}{e} \text{ for } e \geq L .\end{equation}

We will now show that \begin{equation} \label{pt3three: e} f^{p^e \tr{\eta_1+\eta_2}{L} + p^e \tr{\error}{e} + 1} \in \bracket{\m}{e} \text{ for } e \geq L. \end{equation}

Assume the statement in \eqref{pt3three: e} is false.  By Lemma \ref{LastLemma: L}, we may write
\[ \(f^{p^d \tr{\eta_1 + \eta_2}{L}} \)^{p^{e-d}} \cdot f^{p^e \tr{\error}{e} + 1} \notin \bracket{\m}{e}. \]

Choose supporting monomials $\mu_1 \in \Support\( f^{p^d \tr{\eta_1 + \eta_2}{L}} \)$ and $\mu_2 \in \Support \( f^{p^e \tr{\error}{e} + 1} \)$ such that $\mu_1^{p^{e-d}} \mu_2 \notin \bracket{\m}{e}$.  Note that $\mu_1 \notin \bracket{\m}{d}$.  By Lemma \ref{reducetosupport: L}, it follows that 

\begin{equation}
\label{pt3mu2: e} \mu_2 = \x^{p^e \EM \bbeta} \text{ with } \bbeta \in \frac{1}{p^e} \cdot \mathbb{N}^2 \cap \LIP \text{ and } | \bbeta | = \tr{\error}{e} + \frac{1}{p^e} \text{ and }
\end{equation}

\begin{equation} \label{pt3mu1: e}  \mu_1 = \x^{p^d \EM \aalpha}  \text{ with } \aalpha \in \frac{1}{p^d} \cdot \mathbb{N}^2 \cap \LIP \text{ and } | \aalpha | = \tr{\eta_1 + \eta_2}{L} = \tr{\eta_1}{d} + \tr{\eta_2}{d} + \frac{1}{p^d}. \end{equation}

The condition that $\mu_1^{p^{e-d}} \cdot \mu_2 = \x^{p^e \EM \aalpha + p^e \EM \bbeta} \notin \bracket{\m}{e}$ then implies that  
\begin{equation} \label{pt3four: e} 
\gg : = \aalpha + \bbeta \in \LIP.
\end{equation}

It follows from \eqref{pt3mu1: e} that either $\alpha_1 > \tr{\eta_1}{d}$ or $\alpha_2 > \tr{\eta_2}{d}$.  Without loss of generality, we will assume that $\alpha_2 > \tr{\eta_2}{d}$, and as $\alpha_2 \in \frac{1}{p^d} \cdot \mathbb{N}$, we may even assume that $\alpha_2 \geq \tr{\eta_2}{d} + \frac{1}{p^d}$.  It follows that $\gamma_2 \geq \tr{\eta_2}{d} + \frac{1}{p^d}$.  Furthermore, both \eqref{pt3mu2: e} and \eqref{pt3mu1: e} show that $| \gg | = \tr{\eta_1}{d} + \tr{\eta_2}{d} + \frac{1}{p^d} + \( \tr{\error}{e} + \frac{1}{p^e} \)$.  Combining these observations with \eqref{pt3four: e}, we apply Lemma \ref{GrandSimplifying: L} (with $\delta = \tr{\error}{e} + \frac{1}{p^e}$) to obtain that \[ \tr{\eeta}{d} + \( \tr{\error}{e} + \frac{1}{p^e}, \frac{1}{p^d} \) \in \LIP.\]    As $\error \leq \tr{\error}{e} + \frac{1}{p^e}$, this is impossible by the definition of $\error$.  Thus, \eqref{pt3four: e} is also impossible, and we conclude that \eqref{pt3three: e} holds.
\end{proof}

\bibliographystyle{alpha}
\bibliography{refs}

\end{document}